\newcommand{\bs}{\mathbf{s}}
\newcommand{\ii}{i=1,\ldots,p}
\newcommand{\by}{{\bf y}}
\newtheorem{teorema}{Theorem}
\newtheorem{proposition}{Proposition}
\newtheorem{corollary}{Corollary}
\newtheorem{remark}{Remark}
\newcommand{\x}{\mathbf{x}}
\newcommand{\q}{\mathbf{q}}
\newcommand{\ep}{\mathbf{e}}
\newcommand{\bd}{\mathbf{d}}
\newcommand{\yp}{\mathbf{y}}
\newcommand{\Y}{\mathbf{Y}}
\newcommand{\zero}{\mathbf{0}}
\newcommand{\bI}{\mathbf{I}}
\newcommand{\tb}{\mathbf{t}}
\newcommand{\Hb}{\mathbf{H}}
\newcommand{\Sb}{\mathbf{S}}
\newcommand{\W}{\mathbf{W}}
\newcommand{\Z}{\mathbf{Z}}
\newcommand{\zp}{\mathbf{z}}
\newcommand{\A}{\mathbf{A}}
\newcommand{\ap}{\mathbf{a}}
\newcommand{\bp}{\mathbf{b}}
\newcommand{\X}{\mathbf{X}}
\newcommand{\R}{\mathbf{R}}
\newcommand{\D}{\mathbf{D}}
\newcommand{\FF}{\mathcal{F}}
\newcommand{\LL}{\mathcal{L}}
\newcommand{\II}{\mathcal{I}}
\newcommand{\B}{\mathbf{B}}
\newcommand{\bx}{\mathbf{x}}
\newcommand{\xp}{\mathbf{x}}
\newcommand{\ESN}{\textrm{ESN}}
\newcommand{\TESN}{\textrm{TESN}}
\newcommand{\y}{\mathbf{y}}
\newcommand{\dr}[1]{{\mathrm d}#1}
\newcommand{\bmu}{{ \boldsymbol{\mu}}}
\newcommand{\bSigma}{\boldsymbol{\Sigma}}
\newcommand{\bLambda}{\boldsymbol{\Lambda}}
\newcommand{\btheta}{\boldsymbol{\theta}}
\newcommand{\bxi}{\boldsymbol{\xi}}
\newcommand{\bvarphi}{\boldsymbol{\varphi}}
\newcommand{\bDelta}{\boldsymbol{\Delta}}
\newcommand{\bdelta}{\boldsymbol{\delta}}
\newcommand{\bPsi}{\boldsymbol{\Psi}}
\newcommand{\btau}{\boldsymbol{\tau}}
\newcommand{\bOmega}{\boldsymbol{\Omega}}
\newcommand{\bGamma}{\boldsymbol{\Gamma}}
\newcommand{\blambda}{\boldsymbol{\lambda}}
\newcommand{\bkappa}{\boldsymbol{\kappa}}
\newcommand{\binfty}{\boldsymbol{\infty}}
\newcommand{\binf}{\boldsymbol{\infty}}
\newcommand{\TN}{\textrm{TN}}
\newcommand{\tautil}{\tilde\tau}
\newcommand{\CG}[1]{\textcolor{black}{#1}}
\newcommand{\CGT}[1]{\textcolor{black}{#1}}
\newcommand{\EE}{\mathbbm{E}}
\title{On moments of folded and doubly truncated multivariate extended skew-normal distributions}
\author{
Christian E. Galarza \\
Departamento de Estad\'{\i}stica\\
Escuela Superior Politecnica del Litoral\\
Guayaquil, Ecuador \\
\texttt{chedgala@espol.edu.ec} \\
\And
Larissa A. Matos \\
Departamento de Estat\'{\i}stica\\
Universidade Estadual de Campinas\\
Campinas, Brazil\\
\texttt{larissam@unicamp.br} \\
\And
Dipak K. Dey  \\
Department of Statistics\\
University of Connecticut\\
Storrs CT  06269, U.S.A. \\
\texttt{dipak.dey@uconn.edu} \\
\And
Victor H. Lachos \\
Department of Statistics\\
University of Connecticut\\
Storrs CT  06269, U.S.A. \\
\texttt{hlachos@uconn.edu} \\
}
\begin{document}
\maketitle
\begin{abstract}
{This paper develops recurrence relations for integrals that relate the density of multivariate extended skew-normal (ESN) distribution, including the well-known skew-normal  (SN) distribution introduced by \cite{AzzaliniDV1996} and the popular multivariate normal distribution. These recursions {offer} a fast computation of arbitrary order product moments of the multivariate truncated extended skew-normal and multivariate folded extended skew-normal distributions with the product moments as a byproduct.} In addition to the recurrence approach, we realized that any arbitrary moment of the truncated multivariate extended skew-normal distribution can be computed using a corresponding moment of a truncated multivariate normal distribution, pointing the way to a faster algorithm since a less number of integrals is required for its computation which result much simpler to evaluate. Since there are several methods available to calculate the first two moments of a multivariate truncated normal distribution, we propose an optimized method that offers a better performance in terms of time and accuracy, in addition to consider extreme cases in which other methods fail. The {\sf R MomTrunc} package provides these new efficient methods for practitioners.
\end{abstract}
% keywords can be removed
\keywords{ Extended skew-normal distribution \and Folded normal distribution \and Product moments \and Truncated distributions.}

%-------------------------------------------------------------------------------------------------------
\section{Introduction} \label{sec:intro}

{Many applications on simulations or experimental studies, the researches} often generate a large number of datasets with restricted values to fixed intervals. For example, variables such as pH, grades, viral load in HIV studies and humidity in environmental studies, have upper and lower bounds {due to detection limits}, and the support of their densities is restricted to some {given intervals}. Thus, the {need to study truncated distributions along with their} properties naturally arises. In this context, there has been a growing interest in evaluating the moments of truncated distributions. These variables are also often skewed, departing from the traditional assumption of using symmetric distributions.
For instance, \cite{Tallis1961} provided the formulas for the first two moments of truncated multivariate normal (TN) distributions. \cite{lien1985moments} gave the expressions for the moments of truncated bivariate log-normal distributions with applications to test the Houthakker effect (\cite{houthakker1959scope}) in future markets. \cite{jawitz2004moments} derived the truncated moments of several continuous univariate distributions commonly applied to hydrologic problems. \cite{Kim2008} provided analytical formulas for moments of the truncated univariate Student-t distribution in a recursive form. \cite{flecher2010truncated} obtained expressions for the moments of truncated univariate skew-normal {distributions (\cite{Azzalini1985}}) and applied the results to model {the} relative humidity data. \cite{genc} {studied} the moments of a doubly truncated member of the symmetrical class of univariate normal/independent distributions {and their applications to the} actuarial data. \cite{lin2011some} {presented} a general formula {based on the slice sampling algorithm to approximate the first two moments of the truncated multivariate Student-$t$ (TT) distribution under the double truncation.} \cite{arismendi2013multivariate} provided explicit expressions for computing arbitrary order product moments of the TN distribution by using the moment generating function (MGF). However, the {calculation of this approach relies on differentiation of the MGF and can be somewhat} time consuming.

Instead of differentiating the {MGF of the TN} distribution, \cite{kan2017moments} recently presented recurrence relations for integrals that are directly related to the density of the multivariate normal distribution for computing arbitrary order product moments of the TN distribution. These recursions {offer} a fast computation of the moments of folded normal (FN) and TN distributions, which require evaluating {$p$}-dimensional integrals that involve the Normal (N) density. Explicit expressions for some low order moments of FN and TN distributions are presented in a clever way, although some proposals to {calculate} the moments of the univariate truncated skew-normal distribution (\cite{flecher2010truncated}) and truncated univariate skew-normal/independent distribution (\cite{flecher2010truncated})  has recently been published{. So} far, to the best of our knowledge, there has not been attempt on studying neither moments nor product moments of the multivariate folded extended skew-normal (FESN) and truncated multivariate extended skew-normal (TESN) distributions. Moreover, our proposed methods allow to compute, as a by-product, the product moments of folded and truncated  distributions, of the N (\cite{kan2017moments}), SN (\cite{AzzaliniDV1996}), and their respective univariate versions. The proposed algorithm and methods are implemented in the new R package ``MomTrunc''.

The rest of {this} paper is organized as follows. In Section \ref{FM_SNLMM} we briefly discuss some preliminary results related to the multivariate SN, ESN and TESN distributions and some of its key properties.  The section 3 presents a recurrence formula {of} an integral {to be applied in the essential evaluation} of moments of the TESN distribution as well as explicit expressions for the first two {moments} of the TESN and TN distributions. A direct relation between the moments of the TESN and TN distribution is also presented which is used to improved the proposed methods. In section 4, by means of approximations, we propose strategies to circumvent some numerical problems that arise on limiting distributions and extreme cases. We compare our proposal with others popular methods of the literature in Section 5. Finally, Section \ref{sec:folded:ESN} is devoted to the moments of the FESN distribution, several related results are discussed. Explicit expressions are presented for high order moments for the univariate case and the mean vector and variance-covariance matrix of the multivariate FESN distribution. Finally, some concluding remarks are presented in Section \ref{Conclusion}.

%-------------------------------------------------------------------------------------------------------
\section{Preliminaries}\label{FM_SNLMM}

We start our exposition by {defining} some notation and presenting the basic concepts {which are used throughout the development of our theory}. {As is usual in probability theory and its applications, we denote} a random variable by an {upper-case} letter and its realization by the corresponding lower case {and} use boldface letters {for vectors and matrices.} {Let $\mathbf{I}_p$ and $\mathbf{J}_p$ represent} an identity matrix and a matrix of ones, respectively, both of dimension $p\times p$, {$\A^{\top}$ be the transpose of $\A$, and $|\X|=(|X_1|.\ldots, |X_p|)^{\top}$ mean} the absolute value of each component of the vector $\X$.  For multiple {integrals, we use} the {shorthand} notation
$$\int_{\ap}^{\bp}f(\x)d\x=\int_{a_1}^{b_1}\ldots\int_{a_p}^{b_p}f(x_1,\ldots,x_p)dx_1\ldots x_p,$$ 
where  $\mathbf{a}=(a_1,\ldots,a_p)^\top$  and $\mathbf{b}=(b_1,\ldots,b_p)^\top$.

\subsection{The multivariate skew-normal distribution}

In this subsection we present the skew-normal distribution and some of its properties. We say that a $p\times 1$ random vector ${\Y}$ follows a multivariate SN distribution with $p\times 1$ location vector $\bmu$, $p\times p$ positive definite dispersion matrix $\bSigma$ and $p\times 1$ skewness parameter vector, and we write $\textbf{Y}\sim \textrm{SN}_p(\bmu,\bSigma,\blambda),$ if its joint probability density function (pdf) is given by
\begin{equation}\label{denSN}
SN_p(\mathbf{y};\bmu,\bSigma,\blambda)= 2{\phi_p(\mathbf{y};\bmu,\bSigma)
\Phi_1(\blambda^{\top}\bSigma^{-1/2}(\mathbf{y}-\bmu))},
\end{equation}
%where $\phi_p(\cdot;\bmu,\bSigma)$ stands for the pdf of the
%$p$--variate normal distribution with mean vector $\bmu$ and
%covariance matrix $\bSigma$, $\textrm{N}_{\texttt{p}}(\bmu,\bSigma)$ say, and
{where $\phi_p(\cdot;\bmu,\bSigma)$ represents the probability density distribution (pdf) of a $p$-variate normal distribution with vector mean $\bmu$ and variance-covariance matrix $\bSigma$,} and $\Phi_1(\cdot)$  stands for the cumulative distribution function (cdf) of a standard univariate normal distribution. If  $\blambda=\bf 0$  then  (\ref{denSN}) reduces to the symmetric $\textrm{N}_p(\bmu,\bSigma)$  pdf.  Except by a straightforward difference in the parametrization considered in (\ref{denSN}), this model corresponds to the one introduced by \cite{AzzaliniDV1996}, whose properties  were extensively studied in \cite{AzzaliniC1999} (see
also, \cite{ArellanoG2005}).

\begin{proposition}[\textbf{\emph{cdf} of the SN}]\label{propdenscond} If $\Y\sim
SN_{p}(\bmu,\bSigma,\blambda)$, then for any $\yp\in
\mathbb{\mathbb{R}}^p$
\begin{equation*}\label{denscond}
F_{\Y}(\yp)=P(\Y\leq\yp)=2{\Phi_{p+1}\hspace{-0.2mm}\big((\mathbf{y}^
{\scriptscriptstyle\top},0)^{\top};\bmu^*,\bOmega
\big)},\,\,
\end{equation*}
where $\bmu^*=(\bmu^{\scriptscriptstyle\top},0)^{\top}$ and $\bOmega=\left(\begin{array}{cc}
\bSigma& -\bDelta \\
-\bDelta^{\top} & 1
\end{array}
\right),$ with $\bDelta=\bSigma^{1/2}\blambda/{(1+\blambda^{\top}\blambda)^{1/2}}.$
\end{proposition}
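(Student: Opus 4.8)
The plan is to establish the identity probabilistically, by representing the skewing factor $\Phi_1(\cdot)$ as a probability over an auxiliary independent standard normal variable and then recognizing the resulting $(p+1)$-fold Gaussian integral as the cdf of a $(p+1)$-variate normal vector. First I would write
\begin{equation*}
F_{\Y}(\yp)=\int_{-\binf}^{\yp}2\phi_p(\x;\bmu,\bSigma)\,\Phi_1\big(\blambda^{\top}\bSigma^{-1/2}(\x-\bmu)\big)\,d\x,
\end{equation*}
and use $\Phi_1(z)=P(U\le z)$ for $U\sim \N(0,1)$ to turn the skewing factor into an integral, giving
\begin{equation*}
F_{\Y}(\yp)=2\int_{-\binf}^{\yp}\int_{-\infty}^{\blambda^{\top}\bSigma^{-1/2}(\x-\bmu)}\phi_p(\x;\bmu,\bSigma)\,\phi_1(u)\,du\,d\x.
\end{equation*}
The integrand is precisely the joint density of $(\X^{\top},U)^{\top}$ with $\X\sim \N_p(\bmu,\bSigma)$ and $U\sim\N(0,1)$ independent, so that $F_{\Y}(\yp)=2\,P\big(\X\le\yp,\ U\le\blambda^{\top}\bSigma^{-1/2}(\X-\bmu)\big)$.

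Next I would linearize the second event. Setting $V=U-\blambda^{\top}\bSigma^{-1/2}(\X-\bmu)$, the constraint $U\le\blambda^{\top}\bSigma^{-1/2}(\X-\bmu)$ becomes simply $V\le 0$, and since $(\X^{\top},V)^{\top}$ is an affine function of the jointly normal vector $(\X^{\top},U)^{\top}$, it is itself jointly normal. A direct computation, using the symmetry of $\bSigma^{1/2}$, gives $\EE[V]=0$, $\mathrm{Var}(V)=1+\blambda^{\top}\blambda$ and $\mathrm{Cov}(\X,V)=-\bSigma^{1/2}\blambda$, so $(\X^{\top},V)^{\top}$ has mean $(\bmu^{\top},0)^{\top}=\bmu^*$. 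To force unit variance in the last coordinate I would rescale, putting $V^*=V/(1+\blambda^{\top}\blambda)^{1/2}$; this leaves the event unchanged, since $V\le 0\Leftrightarrow V^*\le 0$, and yields $\mathrm{Var}(V^*)=1$ together with $\mathrm{Cov}(\X,V^*)=-\bSigma^{1/2}\blambda/(1+\blambda^{\top}\blambda)^{1/2}=-\bDelta$.

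Collecting these moments shows that $(\X^{\top},V^*)^{\top}\sim\N_{p+1}(\bmu^*,\bOmega)$ with $\bOmega$ exactly the stated block matrix, whence
\begin{equation*}
F_{\Y}(\yp)=2\,P\big(\X\le\yp,\ V^*\le 0\big)=2\,\Phi_{p+1}\big((\yp^{\top},0)^{\top};\bmu^*,\bOmega\big),
\end{equation*}
which is the claimed expression. The only genuinely delicate point is the covariance bookkeeping in the second paragraph: one must check that the off-diagonal block carries the correct negative sign and that normalizing by $(1+\blambda^{\top}\blambda)^{1/2}$ sends $-\bSigma^{1/2}\blambda$ to precisely $-\bDelta$. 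The factor $2$ is carried through untouched, since the auxiliary integration over $u$ merely reconstructs $\Phi_1$, and everything else reduces to the standard fact that affine images of Gaussian vectors are Gaussian.
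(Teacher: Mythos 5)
Your proof is correct, and it rests on the same core identity the paper uses: augmenting $\X\sim \N_p(\bmu,\bSigma)$ with a latent standard normal and recognizing the SN cdf as (twice) a $(p+1)$-variate normal cdf evaluated at $(\yp^{\top},0)^{\top}$. The difference is one of packaging. The paper gets this proposition as an immediate consequence of the selection (conditioning) representation it states separately: with $(\X_1^{\top},X_2)^{\top}\sim \N_{p+1}(\bmu^*,\bOmega)$ one has $\Y \stackrel{d}{=} (\X_1\mid X_2<\tilde\tau)$ (Proposition \ref{stochastic}, credited to Arellano-Valle and Genton), so for $\tau=0$ the cdf is $P(\X_1\le\yp, X_2<0)/P(X_2<0)=2\,\Phi_{p+1}\big((\yp^{\top},0)^{\top};\bmu^*,\bOmega\big)$ in one line. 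You instead re-derive that representation from scratch: introducing $U$, passing to $V=U-\blambda^{\top}\bSigma^{-1/2}(\X-\bmu)$ and rescaling to $V^*$ is exactly the construction that proves $\Y\stackrel{d}{=}(\X\mid V^*\le 0)$ with $(\X^{\top},V^*)^{\top}\sim\N_{p+1}(\bmu^*,\bOmega)$. What your route buys is self-containedness (no appeal to the cited representation), at the cost of the explicit covariance bookkeeping — which you carry out correctly, including the sign of the off-diagonal block and the implicit use of the symmetric square root $\bSigma^{1/2}$ so that $\bSigma\bSigma^{-1/2}\blambda=\bSigma^{1/2}\blambda$; what the paper's route buys is that the same one-line argument immediately generalizes to the ESN case ($\tau\neq 0$), where the factor $2$ becomes $\xi^{-1}$.
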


It is worth mentioning that the multivariate skew-normal distribution is not closed over marginalization and conditioning. Next, we present its extended version which holds these properties, called, the multivariate ESN distribution.

\subsection{The extended multivariate skew-normal distribution}

We say that a $p\times 1$ random vector ${\Y}$ follows a ESN distribution with $p\times 1$ location vector $\bmu$, $p\times p$ positive definite dispersion matrix $\bSigma$, a $p\times 1$ skewness parameter vector, and shift parameter $\tau\in\mathbb{R}$, denoted by $\textbf{Y}\sim \textrm{ESN}_p(\bmu,\bSigma,\blambda,\tau),$ if its pdf is given by
\begin{equation}\label{denESN}
ESN_p(\mathbf{y};\bmu,\bSigma,\blambda,\tau)= \xi^{-1}{\phi_p(\mathbf{y};\bmu,\bSigma)
\Phi_1(\tau+\blambda^{\top}\bSigma^{-1/2}(\mathbf{y}-\bmu))},
\end{equation}
with $\xi=\Phi_1(\tau/(1+\blambda^{\top}\blambda)^{1/2})$. Note that when $\tau=0$, we retrieve the skew-normal distribution defined in (\ref{denSN}), that is, $ ESN_p(\mathbf{y};\bmu,\bSigma,\blambda,0)=SN_p(\mathbf{y};\bmu,\bSigma,\blambda)$.
Here, we used a slightly different parametrization of the ESN distribution than the one given in \cite{ArellanoA2006} and \cite{arellano2010multivariate}.
Futhermore, \cite{arellano2010multivariate} deals with the multivariate extended skew-t (EST) distribution, in which the ESN is a particular case when the degrees of freedom $\nu$ goes to infinity. From this last work, it is straightforward to see that
$$  ESN_p(\mathbf{y};\bmu,\bSigma,\blambda,\tau) {\longrightarrow} \phi_p(\mathbf{y};\bmu,\bSigma),\,\,{\text as }\,\,\,\tau\rightarrow +\infty.$$
Also, letting $\Z = \bSigma^{-1/2}(\Y-\bmu)$, it follows that $\Z \sim \ESN_{p}(\zero,\mathbf{I},\blambda,\tau)$, with mean vector and variance-covariance matrix
\begin{equation*}
\mathbb{E}[\Z] = \eta\blambda \quad\quad \text{and}\quad\quad
\mathrm{cov}[\Z] = \mathbf{I}_p - \mathbb{E}[\Z]\left(\mathbb{E}[\Z] - \frac{\tau}{1+\blambda^{\top}\blambda}\blambda\right)^{\top},
\end{equation*}
with $\eta = {\phi_1(\tau;0,1+\blambda^{\top}\blambda)}/\xi$. Then, the mean vector and variance-covariance matrix of $\Y$ can be easily computed as $\mathbb{E}[\Y] = \bmu + \bSigma^{1/2}\mathbb{E}[\Z]$ and $\mathrm{cov}[\Y]=\bSigma^{1/2}\mathrm{cov}[\Z]\bSigma^{1/2}$.

The following  propositions are crucial to develop our methods. The proofs can be found in the Appendix A.
%Here, $|\cdot|$ denotes the absolute value and $\textbf{I}_p$ denotes the identity matrix of dimension $p$.
%The proof is direct from the
%marginal stochastic representation of a skew-normal random vector
%given in \cite{ArellanoBL2005}.\\

\begin{proposition}[\textit{Marginal and conditional distribution of the ESN}]\label{proposition2}
Let  $\Y\sim ESN_{p}(\bmu,\bSigma,\blambda,\tau)$ and $\Y$ is
partitioned as $\Y=(\Y^{\top}_1,\Y^{\top}_2)^{\top}$ of
dimensions $p_1$ and $p_2$ ($p_1+p_2=p$), respectively. Let
$$\bSigma=\left(\begin{array}{cc}
\bSigma_{11} & \bSigma_{12} \\
\bSigma_{21} & \bSigma_{22}
\end{array}
\right),\ \ \bmu=(\bmu^{\top}_1,\bmu^{\top}_2)^{\top}, \ \ \blambda=(\blambda^{\top}_1,\blambda^{\top}_2)^{\top}\quad\text{and}\quad \bvarphi=(\bvarphi^{\top}_1,\bvarphi^{\top}_2)^{\top}$$ be the
corresponding partitions of $\bSigma$, $\bmu$, $\blambda$ and $\bvarphi=\bSigma^{-1/2}\blambda$. Then,
\begin{equation*}
\Y_1 \sim ESN_{p_1}(\bmu_1,\bSigma_{11}, c_{12}\bSigma_{11}^{1/2}\tilde{\bvarphi}_1,c_{12}\tau) \ \mbox{ and } \  \Y_2|\Y_1=\y_1\sim ESN_{p_2}(\bmu_{2.1},\bSigma_{22.1},\bSigma^{1/2}_{22.1}\bvarphi_2, \tau_{2.1}),
\end{equation*}
where $c_{12}=(1+\bvarphi^{\top}_2\bSigma_{22.1}\bvarphi_2)^{-1/2}$, $\tilde{\bvarphi}_1=\bvarphi_1+\bSigma_{11}^{-1}\bSigma_{12}\bvarphi_2$, $\bSigma_{22.1}=\bSigma_{22}-\bSigma_{21}\bSigma^{-1}_{11}\bSigma_{12}$,  $\bmu_{2.1}=\bmu_2+\bSigma_{21}\bSigma^{-1}_{11}(\by_1-\bmu_1)$ and $\tau_{2.1}=\tau+\tilde{\bvarphi}^{\top}_1(\by_1-\bmu_1)$.
\end{proposition}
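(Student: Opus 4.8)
The plan is to reparametrize the skewness vector as $\bvarphi = \bSigma^{-1/2}\blambda$, so that $\blambda^{\top}\bSigma^{-1/2} = \bvarphi^{\top}$ and the argument of $\Phi_1$ in (\ref{denESN}) becomes $\tau + \bvarphi^{\top}(\y - \bmu)$. Partitioning $\bvarphi = (\bvarphi^{\top}_1,\bvarphi^{\top}_2)^{\top}$ conformably with $\y$, this argument splits as $\tau + \bvarphi^{\top}_1(\y_1 - \bmu_1) + \bvarphi^{\top}_2(\y_2 - \bmu_2)$, and I would pair it with the standard Gaussian factorization $\phi_p(\y;\bmu,\bSigma) = \phi_{p_1}(\y_1;\bmu_1,\bSigma_{11})\,\phi_{p_2}(\y_2;\bmu_{2.1},\bSigma_{22.1})$. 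The key algebraic step is to recenter the linear argument on the conditional mean $\bmu_{2.1}$: substituting $\bmu_{2.1} = \bmu_2 + \bSigma_{21}\bSigma^{-1}_{11}(\y_1 - \bmu_1)$ and using $\bvarphi^{\top}_2\bSigma_{21}\bSigma^{-1}_{11} = (\bSigma^{-1}_{11}\bSigma_{12}\bvarphi_2)^{\top}$ gives $\tau + \bvarphi^{\top}(\y - \bmu) = \tau_{2.1} + \bvarphi^{\top}_2(\y_2 - \bmu_{2.1})$, with $\tau_{2.1} = \tau + \tilde{\bvarphi}^{\top}_1(\y_1 - \bmu_1)$ and $\tilde{\bvarphi}_1 = \bvarphi_1 + \bSigma^{-1}_{11}\bSigma_{12}\bvarphi_2$.

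For the conditional law, I would note that, as a function of $\y_2$, the joint density (\ref{denESN}) is proportional to $\phi_{p_2}(\y_2;\bmu_{2.1},\bSigma_{22.1})\,\Phi_1(\tau_{2.1} + \bvarphi^{\top}_2(\y_2 - \bmu_{2.1}))$. This is exactly the unnormalized kernel of an ESN density with location $\bmu_{2.1}$, dispersion $\bSigma_{22.1}$, skewness $\bSigma^{1/2}_{22.1}\bvarphi_2$ (chosen so that its skewness contracts to $\bvarphi^{\top}_2$ under $\bSigma^{-1/2}_{22.1}$), and shift $\tau_{2.1}$. Since the conditional density equals the joint divided by a factor free of $\y_2$, and an ESN density is proper, the normalizing constant is forced and the conditional law is the claimed $ESN_{p_2}(\bmu_{2.1},\bSigma_{22.1},\bSigma^{1/2}_{22.1}\bvarphi_2,\tau_{2.1})$.

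For the marginal of $\Y_1$, I would integrate the joint over $\y_2$. The factor $\phi_{p_1}(\y_1;\bmu_1,\bSigma_{11})$ pulls out of the integral, leaving $\int \phi_{p_2}(\y_2;\bmu_{2.1},\bSigma_{22.1})\,\Phi_1(\tau_{2.1} + \bvarphi^{\top}_2(\y_2 - \bmu_{2.1}))\,d\y_2$, which I would evaluate with the standard identity $\int \phi_m(\x;\bmu_X,\bSigma_X)\Phi_1(a + \bc^{\top}\x)\,d\x = \Phi_1\big((a + \bc^{\top}\bmu_X)/(1 + \bc^{\top}\bSigma_X\bc)^{1/2}\big)$, itself obtained by writing the integral as $\mathbb{E}[\Phi_1(a + \bc^{\top}\X)]$ for $\X \sim \N_m(\bmu_X,\bSigma_X)$ and reducing to a univariate normal probability. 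This yields $\Phi_1(c_{12}\tau + c_{12}\tilde{\bvarphi}^{\top}_1(\y_1 - \bmu_1))$ with $c_{12} = (1 + \bvarphi^{\top}_2\bSigma_{22.1}\bvarphi_2)^{-1/2}$, so the marginal density is $\xi^{-1}\phi_{p_1}(\y_1;\bmu_1,\bSigma_{11})\Phi_1(c_{12}\tau + c_{12}\tilde{\bvarphi}^{\top}_1(\y_1 - \bmu_1))$, which has the ESN form with location $\bmu_1$, dispersion $\bSigma_{11}$, skewness $c_{12}\bSigma^{1/2}_{11}\tilde{\bvarphi}_1$ and shift $c_{12}\tau$.

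The step I expect to be the main obstacle is checking that the leading constant $\xi^{-1}$ is indeed the ESN normalizer of the marginal, i.e. that $\Phi_1\big(c_{12}\tau/(1 + c_{12}^2\tilde{\bvarphi}^{\top}_1\bSigma_{11}\tilde{\bvarphi}_1)^{1/2}\big)$ equals $\xi = \Phi_1\big(\tau/(1 + \bvarphi^{\top}\bSigma\bvarphi)^{1/2}\big)$. This reduces to the quadratic-form identity $\bvarphi^{\top}\bSigma\bvarphi = \tilde{\bvarphi}^{\top}_1\bSigma_{11}\tilde{\bvarphi}_1 + \bvarphi^{\top}_2\bSigma_{22.1}\bvarphi_2$, which I would prove by expanding both sides in blocks and cancelling the $\bSigma_{21}\bSigma^{-1}_{11}\bSigma_{12}$ contributions. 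Together with $c_{12}^{-2} = 1 + \bvarphi^{\top}_2\bSigma_{22.1}\bvarphi_2$, this identity gives $c_{12}^2/(1 + c_{12}^2\tilde{\bvarphi}^{\top}_1\bSigma_{11}\tilde{\bvarphi}_1) = (1 + \bvarphi^{\top}\bSigma\bvarphi)^{-1}$, simultaneously matching the argument and the constant and completing the identification.
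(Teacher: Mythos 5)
Your proof is correct; every step checks out. The recentering identity $\tau+\bvarphi^{\top}(\y-\bmu)=\tau_{2.1}+\bvarphi_2^{\top}(\y_2-\bmu_{2.1})$ follows exactly as you say from $\bvarphi_2^{\top}\bSigma_{21}\bSigma_{11}^{-1}=(\bSigma_{11}^{-1}\bSigma_{12}\bvarphi_2)^{\top}$; the Gaussian identity $\int \phi_m(\x;\bmu_X,\bSigma_X)\Phi_1(a+\bc^{\top}\x)\,\dr\x=\Phi_1\big((a+\bc^{\top}\bmu_X)/(1+\bc^{\top}\bSigma_X\bc)^{1/2}\big)$ is standard and applied with the right $a$ and $\bc$ (so that $a+\bc^{\top}\bmu_X=\tau_{2.1}$); and the block identity $\bvarphi^{\top}\bSigma\bvarphi=\tilde{\bvarphi}_1^{\top}\bSigma_{11}\tilde{\bvarphi}_1+\bvarphi_2^{\top}\bSigma_{22.1}\bvarphi_2$ does hold (the cross terms $\bvarphi_2^{\top}\bSigma_{21}\bSigma_{11}^{-1}\bSigma_{12}\bvarphi_2$ cancel), which together with $\blambda^{\top}\blambda=\bvarphi^{\top}\bSigma\bvarphi$ matches the normalizing constants. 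A direct comparison with the paper is not possible here, since the paper relegates this proof to its supplementary Appendix A, but your route---factorize $\phi_p$ into marginal times conditional, recenter the $\Phi_1$ argument, identify the conditional kernel, and integrate out $\y_2$---is the natural density-level argument and is exactly the factorization the paper later exploits (e.g., when evaluating $ESN_p$ at $x_j=a_j,b_j$ inside the proof of Theorem 1). Two remarks. First, the step you single out as the main obstacle is actually automatic: once you know the marginal density is proportional, as a function of $\y_1$, to the $ESN_{p_1}(\bmu_1,\bSigma_{11},c_{12}\bSigma_{11}^{1/2}\tilde{\bvarphi}_1,c_{12}\tau)$ kernel, properness of both densities forces $\xi^{-1}$ to coincide with that ESN's normalizer; your explicit verification of the quadratic-form identity is a worthwhile consistency check on the parametrization (in particular that the shift is $c_{12}\tau$ and not $\tau$), but the proof does not hinge on it. Second, an alternative route available within the paper is its Proposition 3: writing $\Y \stackrel{d}{=} (\X_1\mid X_2<\tilde{\tau})$ with $(\X_1^{\top},X_2)^{\top}$ jointly normal, both claims follow from ordinary normal marginalization/conditioning; your argument is self-contained at the density level and does not need that representation.
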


\begin{proposition}[\textit{Stochastic representation of the ESN}]\label{stochastic}  Let $\X = ({\X_1}^\top,X_2)^\top
\sim N_{p+1}(\bmu^*,\bOmega)$. If
$\Y \stackrel{d}{=} (\X_1|X_2<\tilde{\tau}),$
it follows that $\Y \sim \ESN_{p}(\bmu,\bSigma,\blambda,\tau)$, with $\bmu^*$ and $\bOmega$ as defined in Proposition \ref{propdenscond}, and $\tilde{\tau}=\tau/(1+\blambda^{\top}\blambda)^{1/2}$.
\end{proposition}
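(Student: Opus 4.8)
The plan is to compute directly the density of the conditional law $(\X_1\mid X_2<\tilde{\tau})$ and verify that it coincides with the ESN density in (\ref{denESN}). Writing $f$ for densities, the definition of conditioning on the event $\{X_2<\tilde{\tau}\}$ gives
$$ f_{\Y}(\y)=\frac{P(X_2<\tilde{\tau}\mid \X_1=\y)\,f_{\X_1}(\y)}{P(X_2<\tilde{\tau})}, $$
so the task reduces to identifying the three factors on the right. From the joint normality assumed in the statement, with $\bmu^*$ and $\bOmega$ as in Proposition \ref{propdenscond}, the marginal of $\X_1$ is $N_p(\bmu,\bSigma)$; hence $f_{\X_1}(\y)=\phi_p(\y;\bmu,\bSigma)$, which already supplies the Gaussian kernel of the ESN density. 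Moreover $X_2\sim N_1(0,1)$, so the normalizing denominator is $P(X_2<\tilde{\tau})=\Phi_1(\tilde{\tau})=\xi$, matching the constant $\xi^{-1}$ in (\ref{denESN}).

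First I would obtain the conditional law of $X_2$ given $\X_1=\y$ from the joint Gaussian structure. Since the off-diagonal block of $\bOmega$ is $-\bDelta$, the standard Gaussian conditioning formulas yield a normal law with mean $-\bDelta^{\top}\bSigma^{-1}(\y-\bmu)$ and variance $1-\bDelta^{\top}\bSigma^{-1}\bDelta$. The crucial algebraic simplification uses $\bDelta=\bSigma^{1/2}\blambda/(1+\blambda^{\top}\blambda)^{1/2}$ together with the symmetry of $\bSigma^{1/2}$, giving $\bDelta^{\top}\bSigma^{-1}\bDelta=\blambda^{\top}\blambda/(1+\blambda^{\top}\blambda)$; the conditional variance therefore equals $1/(1+\blambda^{\top}\blambda)$ and the conditional mean equals $-\blambda^{\top}\bSigma^{-1/2}(\y-\bmu)/(1+\blambda^{\top}\blambda)^{1/2}$.

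Then I would express $P(X_2<\tilde{\tau}\mid\X_1=\y)$ as a standard-normal cdf by standardizing. Substituting $\tilde{\tau}=\tau/(1+\blambda^{\top}\blambda)^{1/2}$ and dividing the centered threshold by the conditional standard deviation $1/(1+\blambda^{\top}\blambda)^{1/2}$, the two square-root factors cancel and the argument collapses exactly to $\tau+\blambda^{\top}\bSigma^{-1/2}(\y-\bmu)$, so that $P(X_2<\tilde{\tau}\mid\X_1=\y)=\Phi_1\big(\tau+\blambda^{\top}\bSigma^{-1/2}(\y-\bmu)\big)$. Assembling the three factors reproduces $\xi^{-1}\phi_p(\y;\bmu,\bSigma)\Phi_1\big(\tau+\blambda^{\top}\bSigma^{-1/2}(\y-\bmu)\big)$, which is precisely (\ref{denESN}).

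The computation is entirely elementary, so there is no deep obstacle; the one place demanding care is the bookkeeping of the symmetric root $\bSigma^{1/2}$ alongside the factor $(1+\blambda^{\top}\blambda)^{1/2}$, ensuring that the conditional variance produces exactly the scaling that standardizes $\tilde{\tau}$ to $\tau$ and that the cancellation in the cdf argument is exact rather than merely proportional. I would record this cancellation explicitly, since it is what pins down the particular shift $\tilde{\tau}$ appearing in the stochastic representation.
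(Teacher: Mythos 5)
Your proof is correct, and it is more self-contained than what the paper actually provides: the paper offers no computation at all for this proposition, simply remarking that the representation ``can be derived from Proposition 1 in Arellano-Valle and Genton (2010).'' Your route --- writing $f_{\Y}(\y)=P(X_2<\tilde{\tau}\mid \X_1=\y)\,f_{\X_1}(\y)/P(X_2<\tilde{\tau})$, extracting the marginal $N_p(\bmu,\bSigma)$, the normalizer $\Phi_1(\tilde{\tau})=\xi$, and the conditional $X_2\mid \X_1=\y\sim N_1\bigl(-\bDelta^{\top}\bSigma^{-1}(\y-\bmu),\,1-\bDelta^{\top}\bSigma^{-1}\bDelta\bigr)$, then standardizing so that the cdf argument collapses to $\tau+\blambda^{\top}\bSigma^{-1/2}(\y-\bmu)$ --- is exactly the elementary verification the citation hides, and all your algebra checks out (in particular $\bDelta^{\top}\bSigma^{-1}\bDelta=\blambda^{\top}\blambda/(1+\blambda^{\top}\blambda)$, which requires the symmetric square root $\bSigma^{1/2}$, a convention the paper does use). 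It is worth noting that your factorization $f_{X_2\mid\X_1}\,f_{\X_1}=f_{X_2}\,f_{\X_1\mid X_2}$ is precisely the identity the paper later invokes in the proofs of Theorems \ref{ESN_theo} and \ref{theo:FF_to_F}, so your argument buys a self-contained proposition at no extra conceptual cost, whereas the paper's citation buys brevity and a pointer to the more general extended skew-$t$ framework from which this is a special case. One incidental benefit of your explicit computation: it makes clear that the correct value is $\bDelta^{\top}\bSigma^{-1}\bDelta=\blambda^{\top}\blambda/(1+\blambda^{\top}\blambda)$, whereas the paper's proof of Theorem \ref{ESN_theo} misstates this quantity as $-\blambda^{\top}\blambda$ (a sign and normalization slip that does not propagate, since the subsequent lines there use the correct variance).
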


The stochastic representation above can be derived from Proposition 1 in \cite{arellano2010multivariate}.

\begin{proposition}[\textit{\emph{cdf} of the ESN}]\label{propdenscond3} If $\Y\sim
\ESN_{p}(\bmu,\bSigma,\blambda,\tau)$, then for any $\yp\in
\mathbb{\mathbb{R}}^p$
\begin{equation*}%\label{denscond}
F_{\Y}(\yp)=P(\Y\leq\yp)=
{\xi^{-1}}
\Phi_{p+1}\hspace{-0.2mm}\big((
\mathbf{y}^{\scriptscriptstyle\top},\tilde\tau)^{\top};
\bmu^*,\bOmega
\big).
\end{equation*}
% where $\mathbf{z}=\yp-\bmu$ and $\bOmega=\left(\begin{array}{cc}
%	\bSigma& -\bSigma^{1/2}\bdelta \\
%	-\bdelta^{\top}\bSigma^{1/2} & 1
%	\end{array}
%	\right)$ and $\tilde{\tau}=\tau/(1+\blambda^{\top}\blambda)^{1/2}$.
\end{proposition}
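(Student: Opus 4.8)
The plan is to bypass direct integration of the ESN density in \eqref{denESN} and instead exploit the stochastic representation established in Proposition \ref{stochastic}, which identifies $\Y$ with a conditioned Gaussian vector. First I would invoke that representation to write $\Y \stackrel{d}{=} (\X_1 \mid X_2 < \tilde\tau)$, where $\X = (\X_1^\top, X_2)^\top \sim N_{p+1}(\bmu^*, \bOmega)$ with $\bmu^*$ and $\bOmega$ as in Proposition \ref{propdenscond}. This reduces the evaluation of $F_{\Y}(\yp) = P(\Y \leq \yp)$ to a conditional probability for the jointly normal vector $\X$, namely
\[
F_{\Y}(\yp) = P(\X_1 \leq \yp \mid X_2 < \tilde\tau) = \frac{P(\X_1 \leq \yp,\, X_2 < \tilde\tau)}{P(X_2 < \tilde\tau)},
\]
which is legitimate since the conditioning event has positive probability.

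Next I would evaluate the two factors separately. For the denominator, I read off the marginal law of the conditioning coordinate from $\bOmega$ and $\bmu^*$: because the bottom-right entry of $\bOmega$ equals $1$ and the last component of $\bmu^*$ is $0$, we have $X_2 \sim N(0,1)$, so $P(X_2 < \tilde\tau) = \Phi_1(\tilde\tau) = \Phi_1\big(\tau/(1+\blambda^\top\blambda)^{1/2}\big) = \xi$, exactly the normalizing constant appearing in \eqref{denESN}. For the numerator, since $\X$ admits a density the strict inequality $\{X_2 < \tilde\tau\}$ coincides almost surely with $\{X_2 \leq \tilde\tau\}$, so the joint event is $\{\X \leq (\yp^\top, \tilde\tau)^\top\}$, whose probability under $\X \sim N_{p+1}(\bmu^*, \bOmega)$ is by definition the multivariate normal cdf $\Phi_{p+1}\big((\yp^\top, \tilde\tau)^\top; \bmu^*, \bOmega\big)$. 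Substituting both pieces gives $F_{\Y}(\yp) = \xi^{-1}\Phi_{p+1}\big((\yp^\top, \tilde\tau)^\top; \bmu^*, \bOmega\big)$, as claimed.

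Since Proposition \ref{stochastic} does the structural work, I do not expect a genuine obstacle; the result is essentially a one-line consequence once the representation is in hand. The only points needing care are the bookkeeping that the truncation probability of $X_2$ matches $\xi$ precisely (which is why the chosen parametrization of $\bOmega$ and $\tilde\tau$ is convenient), and the passage from strict to closed inequalities, harmless under absolute continuity. As an alternative that avoids Proposition \ref{stochastic}, one could integrate the ESN density over $\{\x \leq \yp\}$ directly and use the Gaussian identity expressing the product $\phi_p(\x;\bmu,\bSigma)\,\Phi_1\big(\tau+\blambda^\top\bSigma^{-1/2}(\x-\bmu)\big)$ as a marginal of a $(p+1)$-dimensional normal density with covariance $\bOmega$; this reproduces the same answer but is more laborious, so I would favor the representation-based argument.
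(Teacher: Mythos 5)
Your proposal is correct and follows exactly the paper's own route: the paper proves this proposition in one line ("direct from Proposition \ref{stochastic} by noting that $\xi = P(X_2<\tilde\tau)$"), and your argument simply fills in the same steps—conditioning via the stochastic representation, identifying $X_2\sim N(0,1)$ so that $P(X_2<\tilde\tau)=\xi$, and reading the numerator as the $(p+1)$-variate normal cdf at $(\yp^{\top},\tilde\tau)^{\top}$.
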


Proof is direct from Proposition \ref{stochastic} {by noting that $\xi = P(X_2<\tilde\tau)$}. Hereinafter, for $\Y\sim
\ESN_{p}(\bmu,\bSigma,\blambda,\tau)$, we will denote to its \emph{cdf} as $F_{\Y}(\yp)\equiv \tilde{\Phi}_p(\yp;\bmu,\bSigma,\blambda,\tau)$ for simplicity.\\

Let $\mathbb{A}$ be a Borel set in $\mathbb{R}^p$. We say that the random vector $\Y$ has a truncated extended skew-normal distribution on $\mathbb{A}$ when $\Y$ has the same distribution as $\Y | (\Y \in \mathbb{A})$. In this case, the pdf of $\Y$ is given by
$$
f(\y\mid\bmu,\bSigma,\nu;\mathbb{A})=\displaystyle\frac{ESN_p(\y;\bmu,\bSigma,\blambda,\tau)}{P(\Y \in  \mathbb{A})}\mathbf{1}_{\mathbb{A}}(\y),
$$
where  $\mathbf{1}_{ \mathbb{A}}$ is the indicator function of $ \mathbb{A}$. We use the notation $\Y \sim  {\TESN}_{p}(\bmu,\bSigma,\blambda,\tau;\mathbb{A})$. If $\mathbb{A}$ has the form
\begin{equation*} \label{hyper}
\mathbb{A} = \{(x_1,\ldots,x_p)\in \mathbb{R}^p:\,\,\, a_1\leq x_1 \leq b_1,\ldots, a_p\leq x_p \leq b_p \}=\{\mathbf{x}\in\mathbb{R}^p:\mathbf{a}\leq\mathbf{x}\leq\mathbf{b}\},
\end{equation*}
then we use the notation $\{\Y \in  \mathbb{A}\}=\{\ap \leq \Y \leq \mathbf{b}\}$, where  $\mathbf{a}=(a_1,\ldots,a_p)^\top$ and $\mathbf{b}=(b_1,\ldots,b_p)^\top$. Here, we say that  the distribution of $\Y$ is doubly truncated. Analogously, we define $\{\Y \geq \mathbf{a}\}$ and $\{\Y \leq \mathbf{b}\}$. Thus, we say that the distribution of $\Y$ is truncated from below and truncated from above, respectively. For convenience, we also use the notation $\Y \sim  {\TESN}_{p}(\bmu,\bSigma,\blambda,\tau;[\ap,\bp])$.

%-------------------------------------------------------------------------------------------------------
\section{On moments of the doubly truncated multivariate ESN distribution}

\subsection{A recurrence relation}

For two $p$-{dimensional vectors} $\bx=(x_1,\ldots,x_p)^{\top}$ and $\bkappa=(k_1,\ldots,k_p)^{\top}$,  let $\bx^{\bkappa}$ \ stand \ for \  {$(x_1^{\kappa_1},\ldots,x_p^{\kappa_p})$, and let} $\ap_{(i)}$ be a vector $\ap$ with its $i$th element {being} removed. For a matrix \CGT{$\A$, we let $\A_{i(j)}$ stand for the $i$th row of $\A$ with its $j$th element {being} removed. Similarly, $\A_{(i,j)}$ stands for the matrix $\A$} with its $i$th row and $j$th columns {being} removed. {Besides, let} $\ep_i$ denote {a $p\times 1$ vector} with its $i$th element {equaling} one and zero otherwise. Let
$$\mathcal{L}_p(\ap,\bp;\bmu,\bSigma,\blambda,\tau)=
\int_{\ap}^{\bp}{ESN_p(\mathbf{x};\bmu,\bSigma,\blambda,\tau)
%\xi^{-1}\phi_p(\mathbf{x};\bmu,\bSigma)	\Phi_1(\tau+\blambda^{\top}\bSigma^{-1/2}(\mathbf{x}-\bmu))
}\dr{\xp}.$$
%where $\xi=\Phi(\tilde{\tau})$, with $\tilde{\tau}=\tau/(1+\blambda^{\top}\blambda)^{1/2}$.
We are interested in evaluating the integral
\begin{equation}\label{Fk_ESN}
\FF^p_{\bkappa}(\ap,\bp;\bmu,\bSigma,\blambda,\tau)=
\int_{\ap}^{\bp}{\bx^{\bkappa}{ESN_p(\mathbf{x};\bmu,\bSigma,\blambda,\tau)
%\xi^{-1}\phi_p(\mathbf{x};\bmu,\bSigma)
%	\Phi_1(\tau+\blambda^{\top}\bSigma^{-1/2}(\mathbf{x}-\bmu))
}}\dr{\xp}.
\end{equation}
The boundary condition is obviously $\FF^p_{0}(\ap,\bp;\bmu,\bSigma,\blambda,\tau)=\LL_p(\ap,\bp;\bmu,\bSigma,\blambda,\tau)$. When $\blambda=\zero$ and $\tau=0$, we recover the multivariate normal case, and then
\begin{equation}\label{Fk_ESN1}
\FF^p_{\bkappa}(\ap,\bp;\bmu,\bSigma,\zero,0)\equiv F^p_{\bkappa}(\ap,\bp;\bmu,\bSigma)= \int_{\ap}^{\bp}{\bx^{\bkappa}{{\phi}_p}(\mathbf{x};\bmu,\bSigma)\textrm{d}\bx,}
\end{equation}
%$$^N F^p_{\bkappa}(\ap,\bp,\bmu,\bSigma)=\int_{\ap}^{\bp}{\bx^{\bkappa}\phi_p(\mathbf{x};\bmu,\bSigma)}\dr\bx.$$
with boundary condition
\begin{equation}\label{Fk_ESN2}
\LL_p(\ap,\bp;\bmu,\bSigma,\zero,0) \equiv L_p(\ap,\bp;\bmu,\bSigma) = \int_{\ap}^{\bp}{{{\phi}_p}(\mathbf{x};\bmu,\bSigma)\textrm{d}\bx}.
\end{equation}
Note that we use calligraphic style for the integrals of interest $\FF^p_{\bkappa}$ and $\LL_p$ when we work with the skewed version. In both expressions \eqref{Fk_ESN1} and \eqref{Fk_ESN2}, for the normal case, we are using compatible notation with the one used by \cite{kan2017moments}.

\subsubsection{Univariate case}\label{univariate_rec}

When $p=1$, it is straightforward to use integration by parts to show that
\begin{eqnarray*}
\FF^1_0(a,b;\mu,\sigma^2,\lambda,\tau)&=&\xi^{-1}\left[\Phi_2\left((b-\mu,{\tau})^{\scriptscriptstyle\top};\mathbf{0},\bOmega
\right)-\Phi_2\left((a-\mu,{\tau})^{\scriptscriptstyle\top};\mathbf{0},\bOmega
\right)\right],\\
\FF^1_{k+1}(a,b;\mu,\sigma^2,\lambda,\tau)&=&\mu \FF^1_{k}(a,b;\mu,\sigma^2,\lambda,\tau)+k\sigma^2 \FF^1_{k-1}(a,b;\mu,\sigma^2,\lambda,\tau)\\ 
&&+\sigma^2\big(a^k ESN_1(a;\mu,\sigma^2,\lambda,\tau)
-b^k ESN_1(b;\mu,\sigma^2,\lambda,\tau)\big)\\ 
&&+\lambda\sigma \eta F^1_k(a,b;\mu-\mu_b,\gamma^2);\,\,\,\,\mbox{for } k \geq 0,
\end{eqnarray*}
%where $\eta=\frac{1}{\sqrt{1+\lambda^2}}
%\frac{\phi(\tau)}{\Phi(\tilde \tau)}
%\exp\left\{\frac{\psi^2{\tau}^2}{2}\right\}$,
where %$\eta=\phi_1(\tau;0,1+\lambda^2)/\xi$,
$\bOmega=\left(\begin{array}{cc}
\sigma^2& -\sigma\psi \\
-\sigma\psi & 1
\end{array}
\right)$,  $\psi=\lambda/\sqrt{1+\lambda^2}$, $\mu_b=\displaystyle\frac{1}{\sigma}\lambda\tau\gamma^2$ and $\gamma=\sigma/\sqrt{1+\lambda^2}$.

When $p>1$, we need a similar recurrence relation in order to compute $ \FF^p_{\bkappa}(\ap,\bp,\bmu,\bSigma,\blambda,\tau)$ which is presented in the next theorem.

\subsubsection{Multivariate case}

\begin{teorema}\label{ESN_theo}
For $p\geq 1$	and $i=1,\ldots,p$,
\begin{eqnarray}\label{ESN_theo_eq1}
\FF ^p_{\bkappa+\ep_i}(\ap,\bp;\bmu, \bSigma,\blambda, \tau )=\mu_i  \FF^p_{\bkappa}(\ap,\bp;\bmu, \bSigma,\blambda, \tau )+\delta_iF^p_{\bkappa}(\ap,\bp;\bmu-\bmu_b,\bGamma)+ \ep^{{\top}}_i\bSigma \bd_{\bkappa},
\end{eqnarray}
where %$\eta=\phi_1(\tau;0,1 + {\boldsymbol{\lambda}^\top\boldsymbol{\lambda}})/\xi$,
$\bdelta=(\delta_1,\ldots, \delta_p)^{\top}=\eta\bSigma^{1/2}\blambda$,
%$W_{\Phi}(\tilde\tau)=\displaystyle\frac{\phi(\tilde{\tau})}{\Phi(\tilde{\tau})}$
%$\bvarphi=\bSigma^{-1/2}\blambda$,
%$\eta=\frac{\xi^{-1}\phi(\tau)}{\sqrt{1 + {\boldsymbol{\lambda}^\top\boldsymbol{\lambda}}}}
%   \exp\left\{ \frac{\bmu_b^{\top}\bGamma^{-1}\bmu_b}{2}\right\}$,
$\bmu_b=\tautil\bDelta$,
$\bGamma= \bSigma - \bDelta\bDelta^\top$
%\bSigma^{1/2}\bPsi^{-1}\bSigma^{1/2}$,
%$\bPsi = \mathbf{I}_p+\blambda\blambda^{\top}$,
and $\bd_{\bkappa}$	is a $p$-vector with $j$th element
\begin{eqnarray}\label{recur1}
d_{\bkappa,j}&=&k_j \FF ^p_{\bkappa-\ep_j}(\ap,\bp,\bmu, \bSigma,\blambda, \tau )\nonumber\\
&&+a_j^{k_j} \ESN_1(a_j;\mu_j, \sigma^2_j, c_j\sigma_j\tilde{\varphi}_j,c_j\tau) \FF^{p-1}_{\bkappa_{(j)}}(\ap_{(j)},\bp_{(j)};\tilde \bmu^{\ap}_j,\tilde\bSigma_j,\tilde\bSigma^{\scriptscriptstyle 1/2}_j\bvarphi_{(j)},\tilde \tau^{\ap}_j)\nonumber\\
&&-b_j^{k_j} \ESN_1(b_j;\mu_j, \sigma^2_j, c_j\sigma_j\tilde{\varphi}_j,c_j\tau) \FF^{p-1}_{\bkappa_{(j)}}(\ap_{(j)},\bp_{(j)};\tilde \bmu^{\bp}_j,\tilde\bSigma_j,\tilde\bSigma^{\scriptscriptstyle 1/2}_j\bvarphi_{(j)},\tilde \tau^{\bp}_j),
\end{eqnarray}
\noindent where
{\small $$
\tilde\bmu_j^\ap = \bmu_{(j)}+\bSigma_{(j),j}\frac{a_j-\mu_j}{\sigma_j^2}, \,\,
\tilde\bmu_j^\bp = \bmu_{(j)}+\bSigma_{(j),j}\frac{b_j-\mu_j}{\sigma_j^2 },
\,\,
\tilde\varphi_j=\varphi_j+\frac{1}{\sigma^2_j}\bSigma_{j(j)}\bvarphi_{(j)},
\,\,
\tilde\bSigma_j = \bSigma_{(j),(j)}-\frac{1}{\sigma_j^2}\bSigma_{(j),j}\bSigma_{j,(j)},
$$
$$c_j=\frac{1}{(1+\bvarphi^{\top}_{(j)}\tilde\bSigma_j\bvarphi_{(j)})^{1/2}},
\qquad
\tilde\tau^{\ap}_j=\tau+\tilde\varphi_j(a_j-\mu_j),
\qquad\text{and}\qquad
\tilde\tau^{\bp}_j=\tau+\tilde\varphi_j(b_j-\mu_j).
$$}
%,\\
%\tilde\bmu^{\ap}_j&=&\bmu_{(j)}+\frac{(a_j-\mu_j)}{\sigma^2_j}\bSigma_{(j)j},\,\,\,\, \tilde\bmu^{\bp}_j=\bmu_{(j)}+\frac{(b_j-\mu_j)}{\sigma^2_j}\bSigma_{(j)j},\,\,\mbox{and }\,\,
\end{teorema}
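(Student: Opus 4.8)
The plan is to derive the recurrence from a single pointwise differential identity for the ESN density and then integrate it against $\x^{\bkappa}$ over $[\ap,\bp]$. Writing $\bvarphi=\bSigma^{-1/2}\blambda$ as in Proposition~\ref{proposition2}, so that the skewing argument is $\tau+\bvarphi^{\top}(\x-\bmu)$, I would first differentiate $ESN_p(\x;\bmu,\bSigma,\blambda,\tau)$ with respect to $\x$. Using $\nabla_{\x}\phi_p(\x;\bmu,\bSigma)=-\bSigma^{-1}(\x-\bmu)\phi_p(\x;\bmu,\bSigma)$ and the chain rule on $\Phi_1$, this gives
\begin{equation*}
\nabla_{\x} ESN_p(\x) = -\bSigma^{-1}(\x-\bmu)\,ESN_p(\x)+\xi^{-1}\bvarphi\,\phi_p(\x;\bmu,\bSigma)\,\phi_1\!\big(\tau+\bvarphi^{\top}(\x-\bmu)\big).
\end{equation*}

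The crux is to collapse the product of the two Gaussian factors into a single Gaussian. By completing the square in the combined exponent and applying the Sherman--Morrison formula together with the matrix determinant lemma, I would show
\begin{equation*}
\phi_p(\x;\bmu,\bSigma)\,\phi_1\!\big(\tau+\bvarphi^{\top}(\x-\bmu)\big)=\eta\,\xi\,\phi_p(\x;\bmu-\bmu_b,\bGamma),
\end{equation*}
where the precision $\bSigma^{-1}+\bvarphi\bvarphi^{\top}$ inverts to $\bGamma=\bSigma-\bDelta\bDelta^{\top}$, the induced mean shift is $\bmu_b=\tautil\bDelta$, and the leftover scalar is exactly $\phi_1(\tau;0,1+\blambda^{\top}\blambda)=\eta\,\xi$. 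Substituting back and multiplying through by $\bSigma$ yields the master identity
\begin{equation*}
x_i\,ESN_p(\x)=\mu_i\,ESN_p(\x)-\ep_i^{\top}\bSigma\,\nabla_{\x}ESN_p(\x)+\delta_i\,\phi_p(\x;\bmu-\bmu_b,\bGamma),
\end{equation*}
since $\eta\bSigma\bvarphi=\eta\bSigma^{1/2}\blambda=\bdelta$.

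Integrating this identity against $\x^{\bkappa}$ over $[\ap,\bp]$ produces the three terms of \eqref{ESN_theo_eq1} at once: the first gives $\mu_i\FF^p_{\bkappa}$, the third gives $\delta_i F^p_{\bkappa}(\ap,\bp;\bmu-\bmu_b,\bGamma)$, and the middle contributes $-\ep_i^{\top}\bSigma\int_{\ap}^{\bp}\x^{\bkappa}\nabla_{\x}ESN_p(\x)\,\dr{\xp}$, so it remains to show that its $j$th entry, with a sign flip, equals $d_{\bkappa,j}$. For this I would integrate by parts in the single variable $x_j$. The interior term returns $k_j\FF^p_{\bkappa-\ep_j}$, while the boundary term, evaluated at $x_j=a_j$ and $x_j=b_j$, leaves a $(p-1)$-dimensional integral of $\x_{(j)}^{\bkappa_{(j)}}$ against $ESN_p(\x)$ with the $j$th coordinate frozen.

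The remaining step, and the main bookkeeping obstacle, is to factor that frozen density as a marginal times a conditional. Applying Proposition~\ref{proposition2} with the partition that isolates coordinate $j$ (i.e. $p_1=1$, $p_2=p-1$) identifies $ESN_p(\x)\big|_{x_j=a_j}$ as $ESN_1(a_j;\mu_j,\sigma_j^2,c_j\sigma_j\tilde\varphi_j,c_j\tau)$ times $ESN_{p-1}(\x_{(j)};\tilde\bmu_j^{\ap},\tilde\bSigma_j,\tilde\bSigma_j^{1/2}\bvarphi_{(j)},\tilde\tau_j^{\ap})$, and similarly at $b_j$; matching the reduced parameters of the proposition to $c_j$, $\tilde\varphi_j$, $\tilde\bSigma_j$, $\tilde\bmu_j^{\ap}$, $\tilde\bmu_j^{\bp}$, $\tilde\tau_j^{\ap}$ and $\tilde\tau_j^{\bp}$ is the delicate part. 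The $(p-1)$-dimensional integrals of $\x_{(j)}^{\bkappa_{(j)}}$ against these conditional densities are precisely $\FF^{p-1}_{\bkappa_{(j)}}$, so assembling the boundary contributions gives the second and third lines of \eqref{recur1} and completes the proof.
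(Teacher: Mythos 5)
Your proposal is correct and follows essentially the same route as the paper's own proof: differentiate the ESN density, collapse the product $\phi_p(\x;\bmu,\bSigma)\,\phi_1\big(\tau+\bvarphi^{\top}(\x-\bmu)\big)$ into $\eta\,\xi\,\phi_p(\x;\bmu-\bmu_b,\bGamma)$, integrate the resulting identity against $\x^{\bkappa}$ over $[\ap,\bp]$, apply integration by parts in $x_j$, and factor the boundary densities via Proposition~\ref{proposition2}. The only (immaterial) difference is how the collapsing identity is established: the paper obtains it probabilistically from the factorization $f_{X_2|\X_1}(\tautil|\xp)f_{\X_1}(\xp)=f_{X_2}(\tautil)f_{\X_1|X_2}(\xp|\tautil)$ for $(\X_1^{\top},X_2)^{\top}\sim N_{p+1}(\bmu^*,\bOmega)$, whereas you complete the square and invoke Sherman--Morrison and the matrix determinant lemma, which yields the same $\bGamma$, $\bmu_b$ and scalar $\eta\xi$.
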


%Note that $\tilde\bmu^{\ap}_j$, $\tilde\bmu^{\bp}_j$ and $\tilde\bSigma_j$ have been already defined in proposition \ref{prop_mujab}.
\medskip
\begin{proof}
Let $\X = ({\X_1}^\top,X_2)^\top
\sim N_{p+1}(\bmu^*,\bOmega)$ as in Proposition 2. From the conditional distribution of a multivariate normal, it is straightforward to show that $\X_1|X_2 \sim N_p(\bmu - X_2\bDelta,\bGamma)$ and
$X_2|\X_1 \sim N_1(-\bDelta^\top\bSigma^{-1}(\X_1-\bmu),1 - \bDelta^\top\bSigma^{-1}\bDelta)$. Then it holds that
\begin{align}
f_{X_2|\X_1}(\tautil|\xp)f_{\X_1}(\xp) &= f_{X_2}(\tautil)f_{\X_1|X_2}(\xp|\tautil)
\nonumber
\\
\phi_1(\tautil;-\bDelta^\top\bSigma^{-1}(\xp-\bmu),1 - \bDelta^\top\bSigma^{-1}\bDelta)
\phi_p(\xp;\bmu,\bSigma)
&= \phi_1(\tautil)
\phi_p(\xp;\bmu - \tautil\bDelta,\bGamma)\nonumber\\
\sqrt{1+\blambda^{\scriptscriptstyle\top}\blambda}\times
\phi_1(\tau+\blambda^\top\bSigma^{-1/2}(\xp-\bmu))
\phi_p(\xp;\bmu,\bSigma)
&= \phi_1(\tautil)
\phi_p(\xp;\bmu - \bmu_b,\bGamma)\nonumber\\
\phi_1(\tau+\blambda^\top\bSigma^{-1/2}(\xp-\bmu))
\phi_p(\xp;\bmu,\bSigma)
&= \phi_1(\tau;0,1+\blambda^{\scriptscriptstyle\top}\blambda)
\phi_p(\xp;\bmu - \bmu_b,\bGamma)\label{help2},
\end{align}
\noindent where we have used that $\bDelta^\top\bSigma^{-1}\bDelta = -\blambda^{\scriptscriptstyle\top}\blambda$.
Now, taking the derivative of the ESN density, then
{\small \begin{align}\label{deriv_ESN1}
& -\frac{\partial}{\partial \xp}ESN_p(\xp;\bmu,\bSigma,\blambda,\tau)\nonumber\\
&=
%line one
-\xi^{-1}\bigg\{
\frac{\partial}{\partial \xp}
\phi_{p}(\xp;\bmu,\bSigma)
\times
\Phi(\tau+\blambda^{\top}\bSigma^{-1/2}(\mathbf{x}-\bmu))
+
\phi_{p}(\xp;\bmu,\bSigma)
\times
\frac{\partial}{\partial \xp}\Phi_1\big(\tau+\blambda^{\top}\bSigma^{-1/2}(\mathbf{x}-\bmu)\big)\bigg\} \nonumber\\
%line two
&= \xi^{-1}\Big\{\bSigma^{-1}
(\xp-\bmu)
\phi_{p}(\xp;\bmu,\bSigma)
\Phi_1(\tau+\blambda^{\top}\bSigma^{-1/2}(\mathbf{x}-\bmu))
-
\bSigma^{-1/2}\blambda\phi_{1}(\tau+\blambda^{\top}\bSigma^{-1/2}(\mathbf{x}-\bmu))
\phi_{p}(\xp;\bmu,\bSigma)\Big\}
\nonumber\\
%line three
&= \bigg.\bSigma^{-1}(\xp-\bmu
)ESN_p(\xp;\bmu,\bSigma,\blambda,\tau)
-\xi^{-1}\bSigma^{-1/2}\blambda
\phi_{1}(\tau+\blambda^{\top}\bSigma^{-1/2}(\mathbf{x}-\bmu))
\phi_{p}(\xp;\bmu,\bSigma)
\bigg.\nonumber\\
&\stackrel{\scriptscriptstyle\eqref{help2}}{=} \bSigma^{-1}(\xp-\bmu
)ESN_p(\xp;\bmu,\bSigma,\blambda,\tau)-\eta\bSigma^{-1/2}\blambda\,
\phi_p(\xp;\bmu - \bmu_b,\bGamma)\nonumber\\
&=
\bSigma^{-1}\left\{(\xp-\bmu)ESN_p(\xp;\bmu,\bSigma,\blambda,\tau)-\bdelta\phi_{p}(\xp;\bmu-\bmu_b,\bGamma)\right\},\nonumber
\end{align}}
%\begin{align}\label{deriv_ESN1}
%\frac{\partial}{\partial \xp}ESN_p(\xp;\bmu,\bSigma,\blambda,\tau) &=
%%line one
%\xi^{-1}\bigg\{\phi_{p}(\xp;\bmu,\bSigma)
%\times
%\frac{\partial}{\partial \xp}\Phi_1\big(\tau+\blambda^{\top}\bSigma^{-1/2}(\mathbf{x}-\bmu)\big) +
%\frac{\partial}{\partial \xp}
%\phi_{p}(\xp;\bmu,\bSigma)\nonumber\\
% &\quad
%\times
%\Phi(\tau+\blambda^{\top}\bSigma^{-1/2}(\mathbf{x}-\bmu))\bigg\}, \nonumber\\
%%line two
% &= \xi^{-1}\bigg.\Big\{
% \bSigma^{-1/2}\blambda\phi_{1}(\tau+\blambda^{\top}\bSigma^{-1/2}(\mathbf{x}-\bmu))
% \phi_{p}(\xp;\bmu,\bSigma)
% -\bSigma^{-1}(\xp-\bmu)
% \nonumber\\
% &\quad \times \phi_{p}(\xp;\bmu,\bSigma)
% \Phi_1(\tau+\blambda^{\top}\bSigma^{-1/2}(\mathbf{x}-\bmu))\Big\}\bigg.,\nonumber\\
% %line three
% &= \bigg.-\bSigma^{-1}(\xp-\bmu
% )ESN_p(\xp;\bmu,\bSigma,\blambda,\tau)
% +\xi^{-1}\bSigma^{-1/2}\blambda\,\phi_{p}(\xp;\bmu,\bSigma)\nonumber\\
% &\quad
%\times
% \phi_{1}(\tau+\blambda^{\top}\bSigma^{-1/2}(\mathbf{x}-\bmu)).\bigg.\nonumber\\
% &\stackrel{\scriptscriptstyle\eqref{densities}}{=} -\bSigma^{-1}(\xp-\bmu
% )ESN_p(\xp;\bmu,\bSigma,\blambda,\tau)+\eta\bSigma^{-1/2}\blambda\,
% \phi_p(\xp|\bmu - \bmu_b,\bGamma),\nonumber\\
% &= -
%\bSigma^{-1}\left[(\xp-\bmu)ESN_p(\xp;\bmu,\bSigma,\blambda,\tau)-\bdelta\phi_{p}(\xp;\bmu-\bmu_b,\bGamma)\right],
%\end{align}
\noindent with $\eta = {\phi_1(\tau;0,{1+\blambda^\top\blambda})}/{\xi}$ and $\bdelta=\eta\bSigma^{1/2}\blambda$. Multiplying both sides by $\xp^{\bkappa}$ and integrating from $\ap$
to $\bp$, we have (after suppressing the arguments of $\FF_{\bkappa}^{p}$
and $F_{\bkappa}^{p}$) that\\
$$
\bd_{\bkappa}=\bSigma^{-1}\left[\begin{array}{ccccc}
\FF_{\bkappa+\ep_{1}}^{p} & - & \mu_{1}\FF_{\bkappa}^{p} & - & \delta_{1}F_{\bkappa}^{p}\\
\FF_{\bkappa+\ep_{2}}^{p} & - & \mu_{2}\FF_{\bkappa}^{p} & - & \delta_{2}F_{\bkappa}^{p}\\
& \vdots &  & \vdots\\
\FF_{\bkappa+\ep_{p}}^{p} & - & \mu_{p}\FF_{\bkappa}^{p} & - & \delta_{p}F_{\bkappa}^{p}
\end{array}\right],
$$
and the $j$th element of the left hand side is\\
$$
d_{\bkappa,j}=-\int_{\ap(j)}^{\bp(j)}\xp^{\bkappa}ESN_p(\xp;\bmu,\bSigma,\blambda,\tau)\bigg|_{x_{j}=a_{j}}^{b_{j}}\dr \xp(j)+\int_{\ap}^{\bp}k_{j}\xp^{\bkappa-\ep_{j}}ESN_p(\xp;\bmu,\bSigma,\blambda,\tau)\dr \xp
$$
by using integration by parts. Using Proposition \ref{proposition2}, we know that
$$
ESN_p(\xp;\bmu,\bSigma,\blambda,\tau)\big|_{x_{j}=a_{j}}=ESN_1(a_j;\mu_j, \sigma^2_j, c_j\sigma_j\tilde{\varphi}_j,c_j\tau)
ESN_{p-1}(\xp_{(j)};\tilde \bmu^{\ap}_j,\tilde\bSigma_j,\tilde\bSigma^{\scriptscriptstyle 1/2}_j\bvarphi_{(j)},\tilde \tau^{\ap}_j),
$$
$$
ESN_p(\xp;\bmu,\bSigma,\blambda,\tau)\big|_{x_{j}=b_{j}}=ESN_1(b_j;\mu_j, \sigma^2_j, c_j\sigma_j\tilde{\varphi}_j,c_j\tau)
ESN_{p-1}(\xp_{(j)};\tilde \bmu^{\bp}_j,\tilde\bSigma_j,\tilde\bSigma^{\scriptscriptstyle 1/2}_j\bvarphi_{(j)},\tilde \tau^{\bp}_j),
$$
and we obtain
\begin{eqnarray*}
d_{\bkappa,j}&=&k_j \FF ^p_{\bkappa-\ep_j}(\ap,\bp;\bmu, \bSigma,\blambda, \tau )\\
&&+a_j^{k_j} ESN_1(a_j;\mu_j, \sigma^2_j, c_j\sigma_j\tilde{\varphi}_j,c_j\tau) \FF^{p-1}_{\bkappa_{(j)}}(\ap_{(j)},\bp_{(j)};\tilde \bmu^{\ap}_j,\tilde\bSigma_j,\tilde\bSigma^{\scriptscriptstyle 1/2}_j\bvarphi_{(j)},\tilde \tau^{\ap}_j)\nonumber\\
&&-b_j^{k_j} ESN_1(b_j;\mu_j, \sigma^2_j, c_j\sigma_j\tilde{\varphi}_j,c_j\tau) \FF^{p-1}_{\bkappa_{(j)}}(\ap_{(j)},\bp_{(j)};\tilde \bmu^{\bp}_j,\tilde\bSigma_j,\tilde\bSigma^{\scriptscriptstyle 1/2}_j\bvarphi_{(j)},\tilde \tau^{\bp}_j).
\end{eqnarray*}
\noindent Finally, multiplying both sides by
$\bSigma$, we obtain \eqref{ESN_theo_eq1}. This completes the proof.
\end{proof}

This delivers a simple way to compute any arbitrary moments of multivariate TSN distribution $\FF^p_{\bkappa}$ based on at
most $3p + 1$ lower order terms, with $p + 1$ of them being $p$-dimensional integrals, the
rest being $(p-1)$-dimensional integrals, and a normal integral $F^p_{\bkappa}$ that can be easily computed through our proposed  R package \texttt{MomTrunc} available at CRAN. When $k_j=0$, the first term in (\ref{recur1}) vanishes. When $a_j=-\infty$, the second term vanishes, and when $b_j=+\infty$, the third term vanishes. When we have no truncation, that is, all the $a_i's$ are $-\infty$ and all the   $b_i's$ are $+\infty$, for $\Y\sim \ESN_p(\bmu, \bSigma,\blambda, \tau)$, we have
that $$ \FF ^p_{\bkappa}(-\binfty,+\binfty;\bmu, \bSigma,\blambda, \tau )=\mathbb{E}[\Y^{\bkappa}],$$
and in this case the recursive relation is
$$\mathbb{E}[\Y^{\bkappa+\ep_i}]=\mu_i\mathbb{E}[\Y^{\bkappa}]+\delta_i\mathbb{E}[\W^{\bkappa}]+\sum_{j=1}^p \sigma_{ij}k_j \mathbb{E}[\Y^{\bkappa-\ep_i}],\,\,\,i=1,\ldots,p,$$
with $\W\sim N_p(\bmu-\bmu_b,\bGamma)$.

It is worth to stress that any arbitrary truncated moment of $\Y$, that is,
\begin{equation}\label{expect}
\mathbb{E}[\Y^{\bkappa}|\ap\leq\Y\leq\bp] = \frac{\FF_{\bkappa}^p(\ap,\bp;\bmu,\bSigma,\blambda,\tau)}{\LL_p(\ap,\bp;\bmu,\bSigma,\blambda,\tau)},
\end{equation}
can be computed using the recurrence relation given in Theorem \ref{ESN_theo}. In the next section, we proposed another approach to compute \eqref{expect} using a unique corresponding arbitrary moment to a truncated normal vector.

\subsection{Computing ESN moments based on normal moments}

%\begin{teorema}\label{theo1}
%Let $\X$ be a $p$-variate random vector with pdf $f_\X(\xp;\btheta)$ and cdf $F_\X(\xp;\btheta)$  Let $\mathbb{A}$ be a Borel set in $\mathbb{R}^p$ of the form
%\begin{equation} \label{hyper1}
%\mathbb{A} = \{(x_1,\ldots,x_p)\in \mathbb{R}^p:\,\,\, a_1\leq x_1 \leq b_1,\ldots, a_p\leq x_p \leq b_p \}=\{\mathbf{x}\in\mathbb{R}^p:\mathbf{a}\leq\mathbf{x}\leq\mathbf{b}\}.
%\end{equation}
%Then
%$$P(\X \in \mathbb{A}) = \sum_{\mathbf{s}\in S(\ap,\bp)}\hspace{-1mm}(-1)^{n_s}F_\X(\mathbf{s};\btheta),$$
%{where}
%$S(\ap,\bp)=\ap \times \bp$ is a cartesian product
%%$ = \{\mathbf{s}: \mathbf{s}=(s_1,\ldots,s_p),\,\,with\,\, s_i=\{a_i,b_i\},\, \ii\}$
%%$\{a_1,b_1\}\times\ldots\times\{a_p,b_p\}$
%with $2^p$ elements of the form $\mathbf{s}=(s_1,\ldots,s_p)$,
% and $n_s =
%\sum_{i=1}^{p}\mathbbm{1}(s_i=a_i)$.
%\end{teorema}

\begin{teorema}\label{theo:FF_to_F}
We have that
\begin{equation*}\label{FF_to_F}
\FF_{\bkappa}^p(\ap,\bp;\bmu,\bSigma,\blambda,\tau) =
\xi^{-1}F_{\bkappa^*}^{p+1}(\ap^{*},\bp^{*};\bmu^*,\bOmega),
\end{equation*}
with $\bmu^*$ and $\bOmega$ as defined in Proposition \ref{propdenscond}, and $\bkappa^{*} = (\bkappa^{\scriptscriptstyle\top},0)^{\top}$, $\ap^{*} = (\ap^{\scriptscriptstyle\top},-\infty)^{\top}$ and $
\bp^{*} = (\bp^{\scriptscriptstyle\top},\tilde\tau)^{\top}$.%, with $\bmu^*$ and $\bOmega$ as defined in proposition \ref{propdenscond}.

In particular, for $\bkappa=\zero$, then
\begin{equation}\label{LL_to_L}
\LL_p(\ap,\bp;\bmu,\bSigma,\blambda,\tau) =
\xi^{-1}L_{p+1}(\ap^{*},\bp^{*};\bmu^*,\bOmega).
\end{equation}
\end{teorema}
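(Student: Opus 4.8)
The plan is to read $\FF_{\bkappa}^p(\ap,\bp;\bmu,\bSigma,\blambda,\tau)$ as an \emph{unnormalized} truncated moment and then to use the stochastic representation of Proposition \ref{stochastic} to convert the one-dimensional skewing factor $\Phi_1$ into an extra coordinate of integration. If $\Y\sim\ESN_p(\bmu,\bSigma,\blambda,\tau)$, then by the definition \eqref{Fk_ESN} we have $\FF_{\bkappa}^p(\ap,\bp;\bmu,\bSigma,\blambda,\tau)=\mathbb{E}\big[\Y^{\bkappa}\,\mathbf{1}_{\{\ap\leq\Y\leq\bp\}}\big]$. I would stress that this keeps the normalizing constant attached, in contrast with the conditional expectation \eqref{expect}, since it is the unnormalized object that matches $\xi^{-1}F^{p+1}_{\bkappa^*}$.

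First I would invoke Proposition \ref{stochastic}, giving $\Y\stackrel{d}{=}(\X_1\mid X_2<\tilde\tau)$ with $\X=(\X_1^\top,X_2)^\top\sim N_{p+1}(\bmu^*,\bOmega)$. Writing the resulting conditional expectation as a ratio yields
$$
\FF_{\bkappa}^p(\ap,\bp;\bmu,\bSigma,\blambda,\tau)=\frac{\mathbb{E}\big[\X_1^{\bkappa}\,\mathbf{1}_{\{\ap\leq\X_1\leq\bp\}}\,\mathbf{1}_{\{X_2<\tilde\tau\}}\big]}{P(X_2<\tilde\tau)}.
$$
The denominator equals $\xi$, as already observed in the proof of Proposition \ref{propdenscond3}. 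The numerator is the integral of $\s^{\bkappa^*}\phi_{p+1}(\s;\bmu^*,\bOmega)$ over $\{\ap^*\leq\s\leq\bp^*\}$, where the appended coordinate carries exponent $0$ (because $\X_1^{\bkappa}$ has no power of $X_2$) and ranges over $(-\infty,\tilde\tau)$; this is precisely $F_{\bkappa^*}^{p+1}(\ap^*,\bp^*;\bmu^*,\bOmega)$ with $\bkappa^*=(\bkappa^\top,0)^\top$, $\ap^*=(\ap^\top,-\infty)^\top$ and $\bp^*=(\bp^\top,\tilde\tau)^\top$. This is the stated identity; the special case \eqref{LL_to_L} then follows at once by taking $\bkappa=\zero$ and using the boundary conventions $\FF_0^p=\LL_p$ and $F_0^{p+1}=L_{p+1}$.

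An equivalent purely analytic route avoids probability: write the skewing factor as $\Phi_1(\tau+\blambda^\top\bSigma^{-1/2}(\x-\bmu))=P(X_2<\tilde\tau\mid\X_1=\x)$ and verify, from the conditional law $X_2\mid\X_1\sim N_1(-\bDelta^\top\bSigma^{-1}(\X_1-\bmu),\,1-\bDelta^\top\bSigma^{-1}\bDelta)$ used inside the proof of Theorem \ref{ESN_theo}, the marginalization identity $\phi_p(\x;\bmu,\bSigma)\,\Phi_1(\tau+\blambda^\top\bSigma^{-1/2}(\x-\bmu))=\int_{-\infty}^{\tilde\tau}\phi_{p+1}((\x^\top,w)^\top;\bmu^*,\bOmega)\,dw$; substituting into \eqref{Fk_ESN} and applying Fubini delivers the same conclusion. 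In either route the only delicate point is the bookkeeping that turns the cdf $\Phi_1$ into the $(p+1)$-th integration range $(-\infty,\tilde\tau)$ while appending a null exponent to $\bkappa$. I expect the main obstacle to be the algebraic check that the conditional probability collapses exactly to the ESN skewing factor, which rests on $1-\bDelta^\top\bSigma^{-1}\bDelta=(1+\blambda^\top\blambda)^{-1}$ together with $\tilde\tau\sqrt{1+\blambda^\top\blambda}=\tau$; I would display this simplification explicitly, since it is the crux of the equivalence.
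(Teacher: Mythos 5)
Your proof is correct and takes essentially the same route as the paper: both invoke the stochastic representation $\Y\stackrel{d}{=}(\X_1\mid X_2<\tilde\tau)$ of Proposition \ref{stochastic}, identify $\xi=P(X_2<\tilde\tau)$, and fold the skewing factor into an extra coordinate of integration so that the unnormalized truncated moment becomes the $(p+1)$-dimensional normal integral $F_{\bkappa^*}^{p+1}(\ap^*,\bp^*;\bmu^*,\bOmega)$. The paper writes this with densities and Fubini rather than indicator-function expectations, and your second ``analytic'' route is just that same marginalization identity made explicit, so there is no substantive difference.
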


\begin{proof}
{
The proof is straightforward by Proposition 3. Since a ESN variate can be written as $\Y \stackrel{d}{=} (\X_1|X_2<\tilde{\tau})$, it follows that
\begin{align*}
\FF_{\bkappa}^p(\ap,\bp;\bmu,\bSigma,\blambda,\tau)
=
\int_{\ap}^{\bp}\y^{\bkappa}
f_\Y(\mathbf{y})\dr\y 
&= 
\frac{1}{P(X_2<\tautil)}
\int_{\ap}^{\bp}\y^{\bkappa}
f_{\X_1}(\y)P(X_2<\tautil|\X_1=\y)
\dr\y\\
&= 
\xi^{-1}
\int_{\ap}^{\bp}
\int_{-\infty}^{\tautil}
\y^{\bkappa}
f_{\X_1}(\y)f_{X_2|\X_1}(x_2|\y)
\dr x_2
\dr\y
\\
&= 
\xi^{-1}
\int_{\ap^*}^{\bp^*}
\xp^{\bkappa^*}
f_{\X}(\x_1,x_2)
\dr \xp
\\
&=
\xi^{-1}F_{\bkappa^*}^{p+1}(\ap^{*},\bp^{*};\bmu^*,\bOmega),
\end{align*}
since $\X = ({\X_1}^\top,X_2)^\top$ is distributed as $N_{p+1}(\bmu^*,\bOmega)$.}

\end{proof}

Equation \eqref{LL_to_L} offers us in a very convenient manner to compute $\LL_p(\ap,\bp;\bmu,\bSigma,\blambda,\tau)$, since efficient algorithms already exist to calculate $L_p(\ap,\bp;\bmu,\bSigma)$ (e.g., see \cite{Genz1999}), which avoids performing $2^p$ evaluations of \emph{cdf} of the multivariate N distribution.

\begin{corollary}\label{cor:expectESN_to_N}
For $\Y \sim ESN_{p}(\bmu,\bSigma,\blambda,\tau)$ and $\X \sim N_{p+1}(\bmu^*,\bOmega)$, it follows from Theorem \ref{theo:FF_to_F} that
$$
\mathbb{E}[\Y^{\bkappa}|\ap \leq \Y \leq \mathbf{b}]=\mathbb{E}[\X^{\bkappa^*}|\ap^* \leq \X \leq \mathbf{b}^*],
$$
with $\ap^{*}$, $
\bp^{*}$, $\bkappa^{*}$, $\bmu^*$ and $\bOmega$ as defined in Theorem \ref{theo:FF_to_F}.
\end{corollary}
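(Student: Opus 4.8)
The plan is to recognize that a conditional (truncated) moment is always the ratio of the corresponding truncated integral to the normalizing truncation probability, and then to apply Theorem \ref{theo:FF_to_F} to numerator and denominator separately so that a common factor cancels. Concretely, I would begin from the ratio characterization \eqref{expect}, which expresses the left-hand side as
$$
\mathbb{E}[\Y^{\bkappa}\mid\ap\leq\Y\leq\bp]
=
\frac{\FF_{\bkappa}^p(\ap,\bp;\bmu,\bSigma,\blambda,\tau)}{\LL_p(\ap,\bp;\bmu,\bSigma,\blambda,\tau)}.
$$

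Next I would rewrite both the numerator and the denominator through Theorem \ref{theo:FF_to_F}. The numerator transforms as $\FF_{\bkappa}^p(\ap,\bp;\bmu,\bSigma,\blambda,\tau)=\xi^{-1}F_{\bkappa^*}^{p+1}(\ap^{*},\bp^{*};\bmu^*,\bOmega)$, while the denominator is the special case $\bkappa=\zero$, so by \eqref{LL_to_L} it equals $\LL_p(\ap,\bp;\bmu,\bSigma,\blambda,\tau)=\xi^{-1}L_{p+1}(\ap^{*},\bp^{*};\bmu^*,\bOmega)$. Substituting both, the factor $\xi^{-1}$ appears identically in numerator and denominator and cancels, leaving
$$
\mathbb{E}[\Y^{\bkappa}\mid\ap\leq\Y\leq\bp]
=
\frac{F_{\bkappa^*}^{p+1}(\ap^{*},\bp^{*};\bmu^*,\bOmega)}{L_{p+1}(\ap^{*},\bp^{*};\bmu^*,\bOmega)}
=
\mathbb{E}[\X^{\bkappa^*}\mid\ap^*\leq\X\leq\bp^*],
$$
where the final equality is again the ratio characterization of a truncated moment, now applied to the $(p+1)$-variate normal $\X\sim N_{p+1}(\bmu^*,\bOmega)$.

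There is no genuine obstacle here; the only point requiring care is to verify that the truncation region used in the numerator transform coincides with the one used in the denominator transform, so that the two invocations of Theorem \ref{theo:FF_to_F} are compatible and the cancellation is legitimate. This holds because the vectors $\ap^*=(\ap^{\scriptscriptstyle\top},-\infty)^{\top}$ and $\bp^*=(\bp^{\scriptscriptstyle\top},\tilde\tau)^{\top}$ defined in Theorem \ref{theo:FF_to_F} do not depend on $\bkappa$, so the event $\{\ap^*\leq\X\leq\bp^*\}=\{\ap\leq\X_1\leq\bp,\,X_2<\tilde\tau\}$ is exactly the same region whether $\bkappa$ is arbitrary or zero. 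Consequently the $\xi^{-1}$ factors cancel cleanly and the claimed identity follows at once.
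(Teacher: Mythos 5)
Your proof is correct and follows exactly the route the paper intends: the corollary is stated as an immediate consequence of Theorem \ref{theo:FF_to_F}, obtained by writing the truncated moment as the ratio in \eqref{expect}, applying the theorem (with \eqref{LL_to_L} for the denominator), and cancelling the common factor $\xi^{-1}$. Your additional remark that $\ap^*$ and $\bp^*$ do not depend on $\bkappa$, so both invocations use the same truncation region, is a sound verification of the cancellation step.
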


\subsection{Mean and covariance matrix of multivariate TESN distributions}\label{subsec:meanvar}

%Suppose $\X \sim \ESN_p(\bmu,\bSigma,\blambda,\tau)$ and let
%$\Y \sim \textrm{TESN}_p(\bmu,\bSigma,\blambda,\tau,\mathbb{A})$ denotes a doubly truncated multivariate extended skew-normal over the region $\mathbb{A}$, being a Borel
%set in $\mathbb{R}^p$ of the form
%\begin{equation} \label{hyper1}
%\mathbb{A} = \{(y_1,\ldots,y_p)\in \mathbb{R}^p:\,\,\, a_1\leq y_1 \leq b_1,\ldots, a_p\leq y_p \leq b_p \}=\{\mathbf{y}\in\mathbb{R}^p:\mathbf{a}\leq\mathbf{y}\leq\mathbf{b}\},
%\end{equation}
%which can be conveniently denoted as
Let us consider $\Y \sim \textrm{TESN}_p(\bmu,\bSigma,\blambda,\tau,[\ap,\bp])$. In light of Theorem \ref{ESN_theo}, we have that
\begin{align*}
\mathbb{E}[Y_i] &= \mu_i+\frac{1}{\LL}
\Bigg[\delta_i L + \sum_{j=1}^p \sigma_{ij}\big[
ESN_1(a_j;\mu_j, \sigma^2_j, c_j\sigma_j\tilde{\varphi}_j,c_j\tau)
\LL_{p-1}(\ap_{(j)},\bp_{(j)};\tilde \bmu^{\ap}_j,\tilde\bSigma_j,\tilde\bSigma^{\scriptscriptstyle 1/2}_j\bvarphi_{(j)},\tilde \tau^{\ap}_j) \nonumber \\
& \;\;\;\;{}-
ESN_1(b_j;\mu_j, \sigma^2_j, c_j\sigma_j\tilde{\varphi}_j,c_j\tau)
\LL_{p-1}(\ap_{(j)},\bp_{(j)};\tilde \bmu^{\bp}_j,\tilde\bSigma_j,\tilde\bSigma^{\scriptscriptstyle 1/2}_j\bvarphi_{(j)},\tilde \tau^{\bp}_j)
\big]\Bigg],
\end{align*}
for $i=1,\ldots, p$, where $\LL\equiv \LL_p(\ap,\bp;\bmu,\bSigma,\blambda,\tau)$ and $L \equiv L_p(\ap,\bp;\bmu-\bmu_b,\bGamma)$.

It follows that
\begin{equation}\label{eq:MEAN_ESN}
\mathbb{E}[\Y] = \bmu+
%{\LL}^{-1}
\frac{1}{\LL}
[
%L_p(\ap,\bp,\bmu-\bmu_b,\bGamma)
L
\bdelta + \bSigma(\mathbf{q}_a-\mathbf{q}_b)],
\end{equation}
where the $j$-th element of $\mathbf{q}_a$ and $\mathbf{q}_b$ are
\begin{align*}
{q}_{a,j} & = ESN_1(a_j;\mu_j, \sigma^2_j, c_j\sigma_j\tilde{\varphi}_j,c_j\tau)
\LL_{p-1}(\ap_{(j)},\bp_{(j)};\tilde \bmu^{\ap}_j,\tilde\bSigma_j,\tilde\bSigma^{\scriptscriptstyle 1/2}_j\bvarphi_{(j)},\tilde \tau^{\ap}_j) ,\\
{q}_{b,j} & = ESN_1(b_j;\mu_j, \sigma^2_j, c_j\sigma_j\tilde{\varphi}_j,c_j\tau)
\LL_{p-1}(\ap_{(j)},\bp_{(j)};\tilde \bmu^{\bp}_j,\tilde\bSigma_j,\tilde\bSigma^{\scriptscriptstyle 1/2}_j\bvarphi_{(j)},\tilde \tau^{\bp}_j).
\end{align*}

Denoting $\D= [\bd_{\ep_1},\ldots, \bd_{\ep_p}]$, we can write
\begin{align*}
\mathbb{E}[\Y\Y^{\top}] & = \bmu \mathbb{E}[\Y]^{\top}+\frac{1}{\LL}[
%L_p(\ap,\bp,\bmu-\bmu_b,\bGamma)
L
\bdelta \mathbb{E}[\W]^{\top} + \bSigma\D],\\
\mbox{cov}[\Y] & = \big[\bmu-\mathbb{E}[\Y]\big]\mathbb{E}[\Y]^{\top}+\frac{1}{\LL}[
L
\bdelta \mathbb{E}[\W]^{\top} + \bSigma\D],
%\mbox{cov}[\W] & = \frac{1}{\LL}\big[\bSigma(\D-(\q_a-\q_b)\mathbb{E}[\W]^{\top}) +
%L_p(\ap,\bp,\bmu-\bmu_b,\bGamma)
%L\bdelta(\mathbb{E}[\Y] - \mathbb{E}[\W])^{\top}\big],
\end{align*}

\noindent where $\W \sim \TN_p(\bmu-\bmu_b,\bGamma,[\ap,\bp])$, that is a $p$-variate truncated normal distribution on $[\ap,\bp]$.

Besides, from Corollary \ref{cor:expectESN_to_N}, we have that the first two moments of $\Y$ can be also computed as
\begin{align}
\mathbb{E}[\Y] &= \mathbb{E}[\X]_{(p+1)},\label{eq:MEANVAR_ESN1}\\
\mathbb{E}[\Y\Y^\top] &= \mathbb{E}[\X\X^\top]_{(p+1,p+1)},\label{eq:MEANVAR_ESN2}
\end{align}
with $\X \sim \textrm{TN}_{p+1}(\bmu^*,\bOmega;[\ap^*,\bp^*])$. Note that $\mathrm{cov}[\Y] = \mathbb{E}[\Y\Y^\top] - \mathbb{E}[\Y]\mathbb{E}[\Y^\top]$. Equations \eqref{eq:MEANVAR_ESN1} and \eqref{eq:MEANVAR_ESN2} are more convenient for computing $\EE[\Y]$ and $\mathrm{cov}[\Y]$ since all boils down to compute the mean and the variance-covariance matrix for a $p+1$-variate {TN} distribution which integrals are less complex than the ESN ones.

%Sample codes for this approach using our \texttt{MomTrunc R} package and a comparison of the processing time vs. Monte Carlo (MC) simulations based on 500 samples, can be found in the Appendix C and D.

\subsection{Mean and covariance matrix of TN distributions}\label{subsec:meanvarN}

Some approaches exists to compute the moments of a TN distribution. For instance, for doubly truncation, \cite{bg2009moments} (method available through the \texttt{tmvtnorm R} package) computed the mean and variance of $\X$ directly deriving the MGF of the TN distribution. On the other hand, \cite{kan2017moments} (method available through the \texttt{MomTrunc R} package) is able to compute arbitrary higher order TN moments using a recursive approach as a result of differentiating the multivariate normal density. For right truncation, \cite{vaida2009fast} (see Supplemental Material) proposed a method to compute the mean and variance of $\X$ also by differentiating the MGF, but where the off-diagonal elements of the Hessian matrix are recycled in order to compute its diagonal, leading to a faster algorithm. Next, we present an extension of \cite{vaida2009fast} algorithm to handle doubly truncation.

\subsection{Deriving the first two moments of a double TN distribution through its MGF}\label{proposal}
\bigskip
\begin{teorema}\label{theo:MGF}
Let $\X \sim TN_p(\zero,\R;[\ap,\bp])$, with $\R$ being a correlation matrix of order $p \times p$. Then, the first two moments of $\X$ are given by
\begin{align*}
\mathbbm{E}[\X] &= \left.\frac{\partial m(\tb)}{\partial\tb}\right|_{\tb=\zero}^\top = -\frac{1}{L}\R\q,\\
\mathbbm{E}[\X\X^\top] &= \left.\frac{\partial^2 m(\tb)}{\partial\tb\partial\tb^\top}\right|_{\tb=\zero} = \R + \frac{1}{L}\R\Hb\R,
\end{align*}
and consequently,
\begin{equation*}
\mathrm{cov}[\X] = \R + \frac{1}{L^2}\R\big(L\Hb - \q\q^{\top}\big)\R,
\end{equation*}

\noindent where $L \equiv L_p(\ap,\bp;\zero,\R)$, $\q = \q_a - \q_b$, with the $i$-th element of $\mathbf{q}_a$ and $\mathbf{q}_b$ as
$$
{q}_{a,i} = \phi_1(a_i)
\,L_{p-1}(\ap_{(i)},\bp_{(i)};a_i\R_{(i),i},\tilde\R_i)
\qquad
\text{and}
\qquad
{q}_{b,i} = \phi_1(b_i)
\,L_{p-1}(\ap_{(i)},\bp_{(i)};b_i\R_{(i),i},\tilde\R_i),
$$
%\begin{align}
%{q}_{a,i} & = \phi_1(a_i)
%\,L_{p-1}(\ap_{(i)},\bp_{(i)};a_i\R_{(i),i},\tilde\R_i), \label{eq:qaj} \\
%{q}_{b,i} & = \phi_1(b_i)
%\,L_{p-1}(\ap_{(i)},\bp_{(i)};b_i\R_{(i),i},\tilde\R_i), \label{eq:qbj}
%\end{align}
$\Hb$ being a symmetric matrix of dimension $p$, with off-diagonal elements $h_{ij}$ given by
\begin{align}\label{hij}
{h}_{ij} &= {h}_{ij}^{aa} - {h}_{ij}^{ba} - {h}_{ij}^{ab} + {h}_{ij}^{bb},\nonumber\\
&= \phi_2(a_i,a_j;\rho_{ij})
\,L_{p-2}(\ap_{(i,j)},\bp_{(i,j)};\bmu_{ij}^{aa},\tilde\R_{ij}) -
\phi_2(b_i,a_j;\rho_{ij})
\,L_{p-2}(\ap_{(i,j)},\bp_{(i,j)};\bmu_{ij}^{ba},\tilde\R_{ij})\nonumber\\
&\quad-
\phi_2(a_i,b_j;\rho_{ij})
\,L_{p-2}(\ap_{(i,j)},\bp_{(i,j)};\bmu_{ij}^{ab},\tilde\R_{ij}) +
\phi_2(b_i,b_j;\rho_{ij})
\,L_{p-2}(\ap_{(i,j)},\bp_{(i,j)};\bmu_{ij}^{bb},\tilde\R_{ij}),\nonumber
\end{align} and diagonal elements
\begin{equation}\label{hii}
h_{ii} = a_i q_{a_i} - b_i q_{b_i} - \R_{i,(i)}\Hb_{(i),i},
\end{equation}
with $\tilde\R_i=\R_{(i),(i)} - \R_{(i),i}\R_{i,(i)}$, $\bmu_{ij}^{\alpha\beta} = \R_{(ij),[i,j]}(\alpha_i,\beta_j)^\top$ and $\tilde\R_{ij}=\R_{(i,j),(i,j)} - \R_{(i,j),[i,j]}\R_{[i,j],(i,j)}$.

\end{teorema}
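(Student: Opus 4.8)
The plan is to compute both moments by differentiating the moment generating function of $\X\sim\TN_p(\zero,\R;[\ap,\bp])$, exploiting the fact that an exponential tilt of a centred Gaussian merely shifts its mean. First I would complete the square to obtain
\[
m(\tb)=\frac{1}{L}\int_{\ap}^{\bp}e^{\tb^{\top}\x}\phi_p(\x;\zero,\R)\,\dr{\x}=\frac{1}{L}\,e^{\frac12\tb^{\top}\R\tb}\,L_p(\ap,\bp;\R\tb,\R),
\]
so that, apart from the scalar factor $e^{\frac12\tb^{\top}\R\tb}$, the entire dependence on $\tb$ is carried by the normalizing constant $L_p$ evaluated at the shifted mean $\bmu=\R\tb$. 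Differentiating once and twice and setting $\tb=\zero$ — where $\R\tb=\zero$ annihilates every term coming from the exponential factor — reduces the two moments, via the chain rule with $\partial\bmu/\partial\tb=\R$, to the gradient and Hessian of $L_p(\ap,\bp;\bmu,\R)$ in its mean argument at $\bmu=\zero$. Writing these as $\q=\nabla_{\bmu}L_p|_{\zero}$ and $\Hb=\nabla_{\bmu}\nabla_{\bmu}^{\top}L_p|_{\zero}$, one finds $\mathbb{E}[\X]=\tfrac1L\R\,\nabla_{\bmu}L_p|_{\zero}$ and $\mathbb{E}[\X\X^{\top}]=\R+\tfrac1L\R\big(\nabla_{\bmu}\nabla_{\bmu}^{\top}L_p|_{\zero}\big)\R$, after which the covariance formula falls out of $\mathrm{cov}[\X]=\mathbb{E}[\X\X^{\top}]-\mathbb{E}[\X]\mathbb{E}[\X]^{\top}$ by pulling out the common factors $\R(\cdot)\R$.

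The second step is to evaluate these mean-derivatives of $L_p$ as boundary integrals, using the elementary identity $\partial_{\mu_i}\phi_p(\x;\bmu,\R)=-\partial_{x_i}\phi_p(\x;\bmu,\R)$, which converts each differentiation in the mean into a differentiation in $\x$. For the gradient, integrating $\partial_{x_i}\phi_p$ over the box by the fundamental theorem of calculus collapses the $i$-th coordinate onto its two endpoints; since $\R$ is a correlation matrix and $\bmu=\zero$, the factorization of $\phi_p$ into its $i$-th standard marginal and the conditional law of the remaining coordinates produces exactly the factors $\phi_1(a_i)$ and $\phi_1(b_i)$ multiplying $L_{p-1}$ integrals with conditional mean $a_i\R_{(i),i}$ (resp. $b_i\R_{(i),i}$) and conditional covariance $\tilde\R_i$; these are the components of $\q_a$ and $\q_b$, so $\nabla_{\bmu}L_p|_{\zero}=\q_a-\q_b=\q$. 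Applying the same identity in two distinct coordinates $i\neq j$ collapses both onto their endpoints and, through the bivariate marginal–conditional factorization, yields the off-diagonal entry $h_{ij}$ as the alternating four-term sum in \eqref{hij}, each term being $\phi_2(\cdot,\cdot;\rho_{ij})$ times an $L_{p-2}$ integral with bivariate conditional mean $\bmu_{ij}^{\alpha\beta}$ and covariance $\tilde\R_{ij}$.

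The hard part will be the diagonal entries $h_{ii}$, because a second differentiation in the same coordinate does not reduce to a boundary term: $\partial_{\mu_i}^2\phi_p$ reintroduces the factor $[\R^{-1}\x]_i$, which couples all coordinates and refuses to factor. To circumvent this I would extend the recycling device of \cite{vaida2009fast} to the doubly truncated setting. Writing $\mathbf{m}(\bmu)=\int_{\ap}^{\bp}(\x-\bmu)\phi_p(\x;\bmu,\R)\,\dr{\x}=\R\,\nabla_{\bmu}L_p$, one has $(\R\Hb)_{i\ell}=\partial_{\mu_\ell}m_i|_{\zero}$, so the full matrix $\R\Hb$, and in particular its diagonal, is recovered from first derivatives of $\mathbf{m}$. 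Computing $\partial_{\mu_i}m_i$ needs only a single integration by parts in $x_i$: differentiating the explicit factor $(x_i-\mu_i)$ contributes a term $-L$, the integration by parts contributes a boundary term together with a compensating $+L$, the two $L$'s cancel, and at $\bmu=\zero$ what survives is $(\R\Hb)_{ii}=a_iq_{a,i}-b_iq_{b,i}$. Reading off this diagonal entry as $(\R\Hb)_{ii}=\sum_j R_{ij}h_{ij}$ and isolating $h_{ii}$ (using $R_{ii}=1$) gives precisely \eqref{hii}, namely $h_{ii}=a_iq_{a,i}-b_iq_{b,i}-\R_{i,(i)}\Hb_{(i),i}$, which determines each diagonal element from the already-computed off-diagonal block and the boundary vector $\q$. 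Collecting the gradient, the off-diagonal Hessian block and this diagonal relation then yields the stated expressions for $\mathbb{E}[\X]$, $\mathbb{E}[\X\X^{\top}]$ and $\mathrm{cov}[\X]$; the only delicate bookkeeping throughout is the orientation of the boundary terms — the signs carried by the lower versus the upper endpoints — which is what pins down the overall signs appearing in the mean and in \eqref{hij}--\eqref{hii}.
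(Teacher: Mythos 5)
Your proof is correct and is essentially the paper's own argument: differentiate the MGF after completing the square so that all $\tb$-dependence enters through $L_p(\ap,\bp;\R\tb,\R)$, evaluate the $\bmu$-gradient and the off-diagonal $\bmu$-Hessian entries of $L_p$ as boundary terms via $\partial_{\mu_i}\phi_p=-\partial_{x_i}\phi_p$, and recover the diagonal entries \eqref{hii} by the recycling device of \cite{vaida2009fast} extended to two-sided truncation.

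One discrepancy you should have flagged explicitly: your derivation yields $\mathbb{E}[\X]=+\frac{1}{L}\R\q$ with $\q=\q_a-\q_b$, while the theorem as printed asserts $\mathbb{E}[\X]=-\frac{1}{L}\R\q$. Your sign is the correct one — in the univariate standard case it gives $\mathbb{E}[X]=\big(\phi_1(a)-\phi_1(b)\big)/L$, as it must, and it agrees with \eqref{eq:MEAN_ESN}, where the mean carries $+\bSigma(\q_a-\q_b)$ — so the printed minus sign (equivalently, the ordering in the definition $\q=\q_a-\q_b$) is a typo in the statement; a complete write-up should point this out rather than silently derive the opposite sign of what is being proved. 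A smaller point of the same kind: your reading of $\bmu_{ij}^{\alpha\beta}$ and $\tilde\R_{ij}$ as the conditional mean and covariance of $\x_{(i,j)}$ given $(x_i,x_j)$ is the intended one; taken literally, the displayed formulas omit the factor $\R_{[i,j],[i,j]}^{-1}$.
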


\noindent{\it Proof.} See Appendix A.

The main difference of our proposal in Theorem \ref{theo:MGF} and other approaches deriving the MGF relies on \eqref{hii}, where the diagonal elements are recycled using the off-diagonal elements $h_{ij}, \ 1\leq i\neq j \leq p$. Furthermore, for $\W\sim TN_p(\bmu,\bSigma;[\tilde\ap,\tilde\bp])$, we have that
\begin{align}
\mathbbm{E}[\W] &= \bmu - \Sb\,\mathbbm{E}[\X],\label{MomMGF1}\\
\mathrm{cov}[\W] &= \Sb\,\mathrm{cov}[\X]\,\Sb,\label{MomMGF2}
\end{align}
where $\bSigma$ being a positive-definite matrix, $\Sb=\mathrm{diag}(\sigma_1,\sigma_2,\ldots,\sigma_p)$, and truncation limits $\tilde\ap$ and $\tilde\bp$ such that $\ap = \Sb^{-1}(\tilde\ap-\bmu)$ and $\bp = \Sb^{-1}(\tilde\bp-\bmu)$.

%-------------------------------------------------------------------------------------------------------
\section{Dealing with limiting and extreme cases}

Let consider $\Y \sim \textrm{ESN}_p(\bmu,\bSigma,\blambda,\tau)$. As $\tau \rightarrow \infty$, we have that $\xi = \Phi(\tautil) \rightarrow 1$. Besides, as $\tau \rightarrow -\infty$, we have that $\xi \rightarrow 0$ and consequently $\FF_{\bkappa}^p(\ap,\bp;\bmu,\bSigma,\blambda,\tau) =
\xi^{-1}F_{\bkappa^*}^{p+1}(\ap^{*},\bp^{*};\bmu^*,\bOmega) \rightarrow \infty$. Thus, for negative $\tilde\tau$ values small enough, we are not able to compute $\mathbbm{E}[\Y^\kappa]$ due to computation precision. For instance, in \texttt{R} software, $\Phi(\tilde\tau)=0$ for $\tilde\tau < -37$.  The next proposition helps us to circumvent this problem.

%\subsection{}

\begin{proposition}\label{prop:lambdas} (Limiting distribution for the ESN)
\noindent As $\tau\rightarrow -\infty$,
\begin{equation*}\label{tau_inf_neg0}
ESN_p(\yp;\bmu,\bSigma,\blambda,\btau) {\longrightarrow} \phi_p({\yp};\bmu - \bmu_b,\bGamma).
\end{equation*}
\end{proposition}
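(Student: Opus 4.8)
The plan is to prove the convergence at the level of densities, by forming the pointwise ratio $ESN_p(\yp;\bmu,\bSigma,\blambda,\tau)/\phi_p(\yp;\bmu-\bmu_b,\bGamma)$ and analysing its behaviour as $\tau\to-\infty$. The key tool is the algebraic identity \eqref{help2} already established inside the proof of Theorem \ref{ESN_theo},
\begin{equation*}
\phi_1\big(\tau+\blambda^{\top}\bSigma^{-1/2}(\yp-\bmu)\big)\,\phi_p(\yp;\bmu,\bSigma)=\phi_1(\tau;0,1+\blambda^{\top}\blambda)\,\phi_p(\yp;\bmu-\bmu_b,\bGamma),
\end{equation*}
which lets me express the target density $\phi_p(\yp;\bmu-\bmu_b,\bGamma)$ through $\phi_p(\yp;\bmu,\bSigma)$ times a univariate Gaussian factor, so that the two $p$-variate densities cancel in the ratio and the whole problem becomes univariate.

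Substituting the definition \eqref{denESN} of the ESN density, abbreviating $u=\tau+\blambda^{\top}\bSigma^{-1/2}(\yp-\bmu)$, and using $\phi_1(\tau;0,1+\blambda^{\top}\blambda)=(1+\blambda^{\top}\blambda)^{-1/2}\phi_1(\tautil)$ together with $\xi=\Phi_1(\tautil)$, the ratio collapses to
\begin{equation*}
\frac{ESN_p(\yp;\bmu,\bSigma,\blambda,\tau)}{\phi_p(\yp;\bmu-\bmu_b,\bGamma)}=\frac{1}{\sqrt{1+\blambda^{\top}\blambda}}\cdot\frac{\Phi_1(u)/\phi_1(u)}{\Phi_1(\tautil)/\phi_1(\tautil)}.
\end{equation*}
Everything is now phrased through the standard normal Mills ratio $\Phi_1(x)/\phi_1(x)$, and I would finish with the tail expansion $\Phi_1(x)/\phi_1(x)=-x^{-1}\big(1+O(x^{-2})\big)$ as $x\to-\infty$, which applies because both $u$ and $\tautil=\tau/\sqrt{1+\blambda^{\top}\blambda}$ diverge to $-\infty$ with $\tau$.

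The delicate point, and the step I expect to be the main obstacle, is controlling the quotient of the two Mills ratios, since each factor separately blows up: the cancellation is governed by the relative rate at which $u$ and $\tautil$ go to $-\infty$, and to leading order the quotient behaves like $\tautil/u$. Evaluated in the region where the ESN actually places its mass, namely for $\yp$ comparable to the drifting location $\bmu-\bmu_b=\bmu-\tautil\bDelta$, one computes $\tautil/u\to\sqrt{1+\blambda^{\top}\blambda}$, so that the prefactor $(1+\blambda^{\top}\blambda)^{-1/2}$ and this quotient combine to the limit $1$, as claimed. As a conceptually cleaner and more robust route I would also supply the argument through the stochastic representation of Proposition \ref{stochastic}: since $\Y\stackrel{d}{=}(\X_1\mid X_2<\tautil)$ with $\X_1\mid X_2\sim N_p(\bmu-X_2\bDelta,\bGamma)$, as $\tautil\to-\infty$ the truncated variable $X_2\mid(X_2<\tautil)$ concentrates at its upper endpoint $\tautil$ with vanishing spread, which forces the conditional law of $\X_1$ to collapse onto $N_p(\bmu-\tautil\bDelta,\bGamma)=N_p(\bmu-\bmu_b,\bGamma)$. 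This is exactly the form in which the result is needed, since the proposition is applied only to the normalized moments in \eqref{expect}, where any overall multiplicative constant is irrelevant.
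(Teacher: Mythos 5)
Your proposal is correct, and your primary argument takes a genuinely different route from the paper's. The paper proves the proposition entirely through the stochastic representation: writing $\Y \stackrel{d}{=} (\X_1\mid X_2<\tautil)$ as in Proposition~\ref{stochastic}, it notes that $\mathbb{E}[X_2\mid X_2\leq\tautil]\rightarrow\tautil$ and $\mathrm{var}[X_2\mid X_2\leq\tautil]\rightarrow 0$, so the conditioning variable degenerates at $\tautil$ and the law of $\Y$ collapses onto that of $\X_1\mid X_2=\tautil \sim N_p(\bmu-\tautil\bDelta,\bGamma)=N_p(\bmu-\bmu_b,\bGamma)$ --- i.e., exactly the ``cleaner and more robust route'' you sketch at the end of your proposal. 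Your main route --- reducing the density ratio to a quotient of Mills ratios via \eqref{help2} and expanding in the tail --- does not appear in the paper, and it buys something the paper's argument does not: it makes precise the sense in which the convergence holds. Your computation shows the ratio tends to $1$ only in the moving frame $\yp=\bmu-\bmu_b+O(1)$ where the mass concentrates (there $u\approx\tau/(1+\blambda^{\top}\blambda)$, so $\tautil/u\rightarrow\sqrt{1+\blambda^{\top}\blambda}$ exactly cancels the prefactor $(1+\blambda^{\top}\blambda)^{-1/2}$), whereas at any fixed $\yp$ both densities vanish and the ratio in fact tends to $1/(1+\blambda^{\top}\blambda)$, so the displayed statement read naively pointwise holds only in the vacuous sense that both sides go to zero. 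The paper's degenerate-conditioning argument is shorter and well matched to how the proposition is used (normalized moments, where the bulk region is what matters), but it glosses over this distinction and over the step of replacing conditioning on the vanishing-probability event $\{X_2<\tautil\}$ by conditioning on the point value $\{X_2=\tautil\}$; your Mills-ratio analysis supplies the rigorous density-level version of the same fact.
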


\begin{proof}
Let $X_2\sim N(0,1)$. As $\tilde\tau \rightarrow -\infty$, we have that $P(X_2\leq\tilde\tau)\rightarrow 0$, $\mathbbm{E}[X_2|X_2\leq\tilde\tau]\rightarrow\tilde\tau$ and $\mathrm{var}[X_2|X_2\leq\tilde\tau] \rightarrow 0$ (i.e., $X_2$ is (i.e., $\X_2$ is degenerated on $\tautil$). In light of Proposition \ref{stochastic}, $\Y \stackrel{d}{=} (\X_1|X_2=\tilde{\tau})$, and by the conditional distribution of a multivariate normal, it is straightforward to show that $\EE[\X_1|X_2=\tilde{\tau}]=\bmu - \bmu_b$ and $\mathrm{cov}[\X_1|X_2=\tilde{\tau}]=\bGamma$, which concludes the proof.
\end{proof}

%It is also possible to prove \eqref{tau_inf_neg0} using directly differentiation of the ESN density besides the L'Hospital's rule, however the proof is not as succinct as the one above.

\subsection{Approximating the mean and variance-covariance of a TN distribution for extreme cases}

While using the normal relation \eqref{eq:MEANVAR_ESN1} and \eqref{eq:MEANVAR_ESN2}, we may also face numerical problems for extreme settings of $\blambda$ and $\tau$ due to the scale matrix $\bOmega$ does depend on them. Most common problem is that the normalizing constant $L_{p}(\ap^*,\bp^*;\bmu^*,\bOmega)$ is approximately zero, because the probability density has been shifted far from the integration region. It is worth mentioning that, for these cases, it is not even possible to estimate the moments generating Monte Carlo (MC) samples due to the high rejection ratio when subsetting to a small integration region.

For instance, consider a bivariate truncated normal vector $\X=(X_1,X_2)^\top$, with $X_1$ and $X_2$ having zero mean and unit variance, $\mathrm{cov}(X_1,X_2)=-0.5$ and truncation limits $\ap=(-20,-10)^\top$  and $\bp=(-9,10)^ \top$. Then, we have that the limits of $X_1$ are far from the density mass since $P(-20\leq \X_1 \leq -9) \approx 0$. For this case, both the \texttt{mtmvnorm} function from the \texttt{tmvtnorm R} package and the \texttt{Matlab} codes provided in \cite{kan2017moments} return wrong mean values outside the truncation interval $(\ap,\bp)$ and negative variances. Values are quite high too, with mean values greater than $1\times 10^{10}$ and all the elements of the variance-covariance matrix greater than $1\times 10^{20}$. When changing the first upper limit from $-9$ to $-13$, that is $\bp=(-13,10)^\top$, both routines return \texttt{Inf} and \texttt{NaN} values for all the elements.

Although the above scenarios seem unusual, extreme situations that require correction are more common than expected. Actually, the development of this part was motivated as we identified this problem when we fit censored regression models, with high asymmetry and presence of outliers. Hence, we present correction method in order to approximate the mean and the variance-covariance of a multivariate TN distribution even when the numerical precision of the software is a limitation.

%Suppose the partition $\X = (\X_1^\top,\X_2^\top)^\top$
%and the corresponding partitions of $\bmu$, $\bSigma$, $\ap = (\ap_1^\top, \ap_2^\top)^\top$ and $\bp = (\bp_1^\top, \bp_2^\top)^\top$,
%such that $dim(\X_1) =
%    p_1$, $dim(\X_2) = p_2$, \CG{where} $p_1 + p_2 = p$.

\subsubsection*{Dealing with out-of-bounds limits}

Consider the partition $\X = (\X_1^\top,\X_2^\top)^\top$ such that $dim(\X_1) =
p_1$, $dim(\X_2) = p_2$, \CG{where} $p_1 + p_2 = p$.
%Let
%$$
%\bmu=(\bmu^{\top}_1,\bmu^{\top}_2)^{\top},\ \ \bSigma=\left(\begin{array}{cc}
%\bSigma_{11} & \bSigma_{12} \\
%\bSigma_{21} & \bSigma_{22}
%\end{array}
%\right), \ \ \ap = (\ap_1^\top, \ap_2^\top)^\top \ \ \text{and} \ \ \bp = (\bp_1^\top, \bp_2^\top)^\top,$$
%be the
%corresponding partitions of $\bmu$, $\bSigma$, $\ap$ and $\bp$.
It is well known that
\begin{equation*}
\EE[\X] = \EE\left[\hspace{-2mm}
\begin{array}{c}
\EE[\X_1|\X_2]
\\
\X_2
\end{array}
\hspace{-2mm}
\right]
\end{equation*}
and
\begin{equation*}
\mathrm{cov}[\X] = \left[\begin{array}{cc}
\EE[\mathrm{cov}[\X_1|\X_2]] + \mathrm{cov}[\EE[\X_1|\X_2]]&
\mathrm{cov}[\EE[\X_1|\X_2],\X_2]\\
\mathrm{cov}[\X_2,\EE[\X_1|\X_2]] &
\mathrm{cov}[\X_2]
\end{array}
\right].
\end{equation*}

%\noindent where $\bxi_2 = \EE[\X_2|\ap_2\le\X_2\le\bp_2]$ and $\bPsi_{22} = \mathrm{cov}[\X_2|\ap_2\le\X_2\le\bp_2]$.
%, which can be computed using \eqref{eq:MEANVAR_ESN1} and \eqref{eq:MEANVAR_ESN2}.\\

Now, consider $\X \sim \mathrm{TN}_{p}\big(\bmu,\bSigma,[\ap,\bp]\big)$ to be partitioned as above. Also consider the corresponding partitions of $\bmu$, $\bSigma$, $\ap = (\ap_1^\top, \ap_2^\top)^\top$ and $\bp = (\bp_1^\top, \bp_2^\top)^\top$. We say that the limits $[\ap_2,\bp_2]$ of $\X_2$ are out-of-bounds if $P(\ap_2\leq \X_2 \leq \bp_2) \approx 0$. Let us consider the case where we are not able to compute any moment of $\X$, because there exists a partition $\X_2$ of $\X$ of dimension $p_2$ that is out-of-bounds. Note this happens because $L_{p}(\ap,\bp;\bmu,\bSigma) \leq P(\ap_2\leq \X_2 \leq \bp_2) \approx 0.$  Also, we consider the partition $\X_1$ such that $P(\ap_1\leq \X_1 \leq \bp_1) > 0$. Since the limits of $\X_2$ are out-of-bounds (and $\ap_2 < \bp_2$), we have two possible cases: $\bp_2 \rightarrow -\binfty$ or $\ap_2 \rightarrow \binfty$. For convenience, let $\bxi_2 = \EE[\X_2]$ and $\bPsi_{22} = \mathrm{cov}[\X_2]$. For the first case, as $\bp_2\rightarrow -\binfty$, we have that $\bxi_2\rightarrow\bp_2$ and $\bPsi_{22}\rightarrow \zero_{p_2\times p_2}$. Analogously, we have that $\bxi_2\rightarrow\ap_2$ and $\bPsi_{22}\rightarrow \zero_{p_2\times p_2}$ as $\ap_2\rightarrow \binfty$.

Then $\X_1 \sim \mathrm{TN}_{p_1}\big(\bmu_1,\bSigma_{11};[\ap_1,\bp_1]\big)$, $\X_2 \sim {N}_{p_2}\big(\bxi_2,\zero\big)$ (i.e., $\X_2$ is degenerated on $\bxi_2$) and $\X_1|\X_2 \sim \mathrm{TN}_{p_1}\big(\bmu_1 + \bSigma_{12}\bSigma_{22}^{-1}(\bxi_2 - \bmu_2),\bSigma_{11} - \bSigma_{12}\bSigma_{22}^{-1}\bSigma_{21};[\ap_1,\bp_1]\big)$. Given that $\mathrm{cov}[\EE[\X_1|\X_2]] = \zero_{p_1\times p_2}$ and
$\mathrm{cov}[\EE[\X_1|\X_2],\X_2] = \zero_{p_2\times p_2}$, it follows that
\begin{equation}\label{cond oob}
\EE[\X] = \left[\hspace{-2mm}
\begin{array}{c}
\bxi_{1.2}
\\
\bxi_2
\end{array}
\hspace{-2mm}
\right]
\qquad \text{and} \qquad
\mathrm{cov}[\X] =
\left[\begin{array}{cc}
\bPsi_{11.2}&
\zero_{p_1\times p_2} \\
\zero_{p_2\times p_1} &
\zero_{p_2\times p_2}
\end{array}
\right],
\end{equation}
with $\bxi_{1.2} = \EE[\X_1|\X_2]$ and $\bPsi_{11.2} = \mathrm{cov}[\X_1|\X_2]$ being the mean and variance-covariance matrix of a TN distribution, which can be computed using \eqref{MomMGF1} and \eqref{MomMGF2}.

In the event that there are double infinite limits, we can partition the vector as well, in order to avoid unnecessary calculation of these integrals.

\subsubsection*{Dealing with double infinite limits}

Let $p_1$ be the number of pairs in $[\ap,\bp]$ that are both infinite. We consider the partition $\X = (\X_1^\top,\X_2^\top)^\top$, such that the upper and lower truncation limits associated with $\X_1$ are both infinite, but at least one of the truncation limits associated with $\X_2$ is finite. Since $\ap_1=-\binfty$ and $\bp_1=\binfty$, it follows that $\X_1 \sim N_{p_1}\big(\bmu_1,\bSigma_{11}\big)$, $\X_2 \sim \mathrm{TN}_{p_2}\big(\bmu_2,\bSigma_{22},[\ap_2,\bp_2]\big)$ and $\X_1|\X_2 \sim N_{p_1}\big(\bmu_1 + \bSigma_{12}\bSigma_{22}^{-1}(\X_2 - \bmu_2),\bSigma_{11} - \bSigma_{12}\bSigma_{22}^{-1}\bSigma_{21}\big)$. This leads to
\begin{align}\label{cond_inf_mean}
\EE[\X] &= \EE\left[\hspace{-2mm}
\begin{array}{c}
\bmu_1 + \bSigma_{12}\bSigma_{22}^{-1}(\X_2 - \bmu_2)
\\
\X_2
\end{array}
\hspace{-2mm}
\right]= \left[\hspace{-2mm}
\begin{array}{c}
\bmu_1 + \bSigma_{12}\bSigma_{22}^{-1}(\bxi_2 - \bmu_2)
\\
\bxi_2
\end{array}
\hspace{-2mm}
\right],
\end{align}
and
\begin{align}\label{cond_inf_var}
\mathrm{cov}[\X] &=
\left[\begin{array}{cc}
\bSigma_{11} -  \bSigma_{12}\bSigma_{22}^{-1}\big(\bI_{p_2} - \bPsi_{22}\bSigma_{22}^{-1}\big)\bSigma_{21}&
\bSigma_{12}\bSigma_{22}^{-1}\bPsi_{22} \\
\bPsi_{22}\bSigma_{22}^{-1}\bSigma_{21} &
\bPsi_{22}
\end{array}
\right],
\end{align}
with $\bxi_2$ and $\bPsi_{22}$ being the mean vector and variance-covariance matrix of a TN distribution,
which can be computed using \eqref{MomMGF1} and \eqref{MomMGF2} as well. 

\indent As can be seen, we can use equations \eqref{cond_inf_mean} and \eqref{cond_inf_var} to deal with double infinite limits, where the truncated moments are computed only over a $p_2$-variate partition, avoiding some unnecessary integrals and saving some computational effort. On the other hand, expression \eqref{cond oob} let us to approximate the mean and the variance-covariance matrix for cases where the computational precision is a limitation.
%\begin{figure}[!h]
%\centering
%\begin{subfigure}[t]{0.5\textwidth}
%  \includegraphics[width=0.4\linewidth]{integrals2.eps}
%\end{subfigure}
%  \qquad
%  \includegraphics[width=0.4\linewidth]{integrals1.eps}
%\caption{Simulation study. Violin plots for the processing time to compute the mean and the variance for a 4-variate double TESN (left panel) and a 3-variate FESN distribution (right panel). For the FESN case, Method 1 refers to the approach in subsection \ref{subsec:explicit} and Method 2 when using equations \eqref{Imeanvar1} and \eqref{Imeanvar2}.}
%\label{integrals}
%\end{figure}

%-------------------------------------------------------------------------------------------------------
{
\section{Comparison of computational times}
Since this is the first attempt to compute the moments of a TESN, it is not possible to compare our approach with others methods already implemented in statistical softwares, for instance, R or Stata. However, this section intends to compare three possible approaches to compute the mean vector and variance-covariance matrix of a $p$-variate TESN distribution based on our results.
We consider our first proposal derived from Theorem \ref{ESN_theo} which is derived directly from the ESN pdf, as well as the normal relation given in Theorem \ref{theo:FF_to_F}. For the latter, we use different (some existent) methods for computing the mean and variance-covariance of a TN distribution. The methods that we compare are the following:}

\begin{description}[labelsep=0.5em]
\setlength\itemsep{-0.5em}
\item[Proposal 1:] Theorem \ref{ESN_theo}, i.e., equations \eqref{eq:MEAN_ESN}, and \eqref{eq:MEANVAR_ESN2},
\item[Proposal 2:] Normal relation (NR) in Theorem \ref{theo:FF_to_F} using Theorem \ref{theo:MGF},
\item[{Proposal 3:}] NR in Theorem \ref{theo:FF_to_F} using the \texttt{Matlab} routine from \cite{kan2017moments},
\item[{Proposal 4:}] NR in Theorem \ref{theo:FF_to_F} using the \texttt{tmvtnorm
R} function from \cite{bg2009moments}.
\end{description}
Left panel of Figure 1 shows the number of integrals required to achieve this for different dimensions $p$. We compare the proposal 1 for a $p$-variate TESN distribution and the equivalent $p+1$-variate normal approaches K\&R and proposal 2.

\begin{figure}[h!]
\vspace{7mm}
\includegraphics[width=0.95\textwidth]{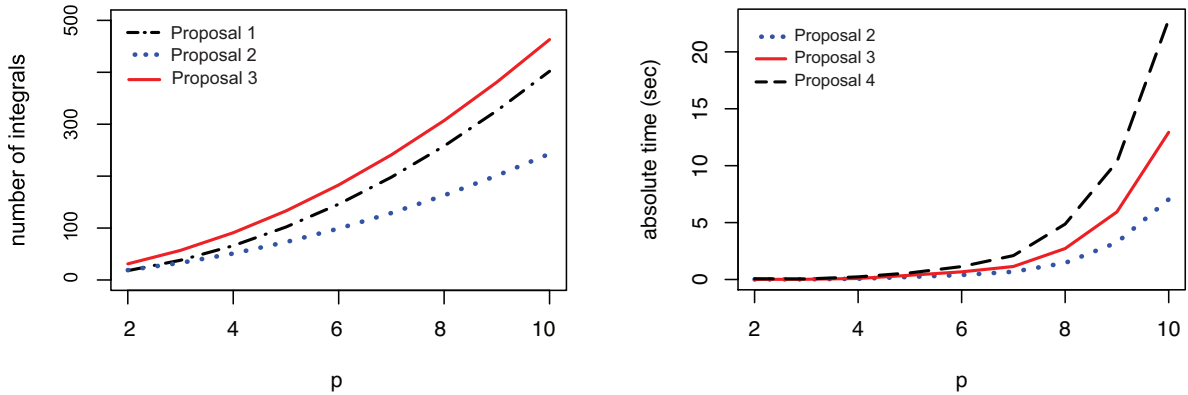}
%\qquad
%\includegraphics[width=0.45\textwidth]{relative.eps}
\caption{Number of integrals required and absolute processing time (in seconds) for computing the mean vector and variance-covariance matrix for a $p$-variate TESN distribution, for 3 different approaches under double truncation.}
\end{figure}

It is clear that the importance of the new proposed method since it reduces the number of integral involved almost to half, this compared to the TESN direct results from proposal 1, when we consider the double truncation. In particular, for left/right truncation, we have that the equivalent $p+1$-variate normal approach along with \cite{vaida2009fast} (now, a special case of proposal 2) requires up to 4 times less integrals than when we use the proposal 3. As seen before, the normal relation proposal 2 outperforms the proposal 1, that is, the equivalent normal approach always resulted faster even it considers one more dimension, that is a $p+1$-variate normal vector, due to its integrals are less complex than for the ESN case.

Processing time when using the equivalent normal approach are depicted in the right panel of Figure 1. Here, we compare the {absolute} processing time of the mean and variance-covariance of a TN distribution under the methods in proposal 2, 3 and 4, for different dimensions $p$. In general, our proposal is the fastest one, as expected. Proposal 3 resulted better only for $p\leq 2$, which confirms the necessity for a faster algorithm, in order to deal with high dimensional problems. Proposal 4 resulted to be the slowest one by far. %\CGV{Our \texttt{MonTrunc R} package computes the mean and the variance of a TMN distribution in an optimal way, such that it uses proposal 3 for $p\leq 2$ and otherwise our proposal 2.}
{
\paragraph{Computational time in real life:} For applications where a unique truncated expectation is required (for example, conditional tail expectations as a measure of risk in Finance), the computation cost may seem insignificant, however, iterative algorithms depending on these quantities become computationally intensive. For instance, in longitudinal censored models under a frequentist point of view, an EM algorithm reduces to the computation of the moments of multivariate truncated moments (Lachos et al., 2017) at each iteration, and for all censored observations along subjects. See that, 125K integrals will be required for an algorithm that converges in 250 iterations and a modest dataset with 100 subjects and only four censored observations. Other models as geostatistical models are even more demanding, so small differences in times may be significant between a tractable and non-tractable problem, even that without these expectations, these must be approximated invoking Monte Carlo methods.}

%-------------------------------------------------------------------------------------------------------
\section{On moments of multivariate folded ESN distributions} \label{sec:folded:ESN}

%{Let $\X\sim ESN_p(\bmu,\bSigma,\blambda,\tau)$, we now turn our attention to discuss the computation of
%any} arbitrary order moment of $|\X|$.

First, we established some general results for the pdf, cdf and moments of multivariate folded distributions (MFD). These extend the results found in \cite{chakraborty2013multivariate} for a FN distribution to any multivariate distribution, as well as the multivariate location-scale family. The proofs are given in Appendix A.

\begin{teorema}[\textbf{\emph{pdf} and \emph{cdf} of a MFD}]\label{theo2}
Let $\X\in\mathbb{R}^p$ be a $p$-variate random vector with pdf $f_\X(\xp;\btheta)$ and cdf $F_\X(\xp;\btheta)$, with $\btheta$ being a set of parameters characterizing such distribution. If
$\Y=|\X|$, then the joint pdf and cdf of $\Y$ {that follows a
folded distribution of $\X$ are} given, respectively, by
$$f_{\Y}(\yp)=\sum_{\mathbf{s}\in S(p)}f_\X(\bLambda_s\yp;\btheta)
\qquad
\text{and}
\qquad
F_{\Y}(\yp) = \sum_{\mathbf{s}\in S(p)}\pi_s F_\X(\bLambda_s\yp;\btheta)
{,\quad\mbox{for }\yp\geq{\bf  0},}
$$
{where}
%$S(p)=\{\mathbf{s}: \mathbf{s}=(s_1,\ldots,s_p),\,\,with\,\, s_i=\{-1,1\}, \ii\}$
$S(p)=\{-1,1\}^p$ is a cartesian product with $2^p$ elements, each of the form $\mathbf{s}=(s_1,\ldots,s_p)$,
$\bLambda_s =
\textrm{Diag}(\mathbf{s})$ and $\pi_{s} = \prod_{i=1}^{p}s_i$.
\end{teorema}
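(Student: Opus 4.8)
The plan is to establish the cdf identity first and then obtain the pdf by differentiating it, since for $\yp\geq\zero$ the mixed partial derivatives of the corner-wise cdf expansion collapse the inclusion--exclusion signs transparently. Throughout I would restrict to $\yp\geq\zero$; for any $\yp$ with a negative coordinate both sides vanish because $\Y=|\X|\geq\zero$ almost surely.

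For the cdf, I would start from the observation that, for $\yp\geq\zero$,
$$\{\Y\leq\yp\}=\{|X_1|\leq y_1,\ldots,|X_p|\leq y_p\}=\{-\yp\leq\X\leq\yp\},$$
so that $F_\Y(\yp)=\EE\big[\prod_{i=1}^p \mathbbm{1}(-y_i\leq X_i\leq y_i)\big]$. Because $\X$ admits a density, the atoms at $\pm y_i$ are negligible and each factor can be written as $\mathbbm{1}(X_i\leq y_i)-\mathbbm{1}(X_i\leq -y_i)=\sum_{s_i\in\{-1,1\}} s_i\,\mathbbm{1}(X_i\leq s_i y_i)$. Expanding the product over $i$ and re-indexing the $2^p$ resulting terms by the sign vectors $\mathbf{s}\in S(p)$ gives
$$\prod_{i=1}^p\big[\mathbbm{1}(X_i\leq y_i)-\mathbbm{1}(X_i\leq -y_i)\big]=\sum_{\mathbf{s}\in S(p)}\Big(\prod_{i=1}^p s_i\Big)\,\mathbbm{1}(\X\leq\bLambda_s\yp)=\sum_{\mathbf{s}\in S(p)}\pi_s\,\mathbbm{1}(\X\leq\bLambda_s\yp),$$
since $\bLambda_s\yp=(s_1y_1,\ldots,s_py_p)^\top$ and $\prod_i s_i=\pi_s$. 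Taking expectations turns each indicator into $F_\X(\bLambda_s\yp;\btheta)$ and yields the claimed cdf formula.

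For the pdf, I would simply differentiate, using $f_\Y(\yp)=\partial^p F_\Y(\yp)/\partial y_1\cdots\partial y_p$. Applying the chain rule to each summand $F_\X(\bLambda_s\yp;\btheta)=F_\X(s_1y_1,\ldots,s_py_p;\btheta)$, each of the $p$ derivatives $\partial/\partial y_i$ contributes one factor $s_i$, so the full mixed derivative equals $(\prod_i s_i)\,f_\X(\bLambda_s\yp;\btheta)=\pi_s f_\X(\bLambda_s\yp;\btheta)$. Combining this with the $\pi_s$ already carried by the cdf expansion produces $\pi_s^2=1$, so all signs cancel and $f_\Y(\yp)=\sum_{\mathbf{s}\in S(p)}f_\X(\bLambda_s\yp;\btheta)$. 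Equivalently, one can argue directly from the change-of-variables theorem: the map $\xp\mapsto|\xp|$ is $2^p$-to-one onto the positive orthant with preimages $\{\bLambda_s\yp:\mathbf{s}\in S(p)\}$, and each $\bLambda_s$ is a bijection with $|\det\bLambda_s|=1$, so the densities add with unit weight.

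The only place that needs genuine care is the bookkeeping in the cdf step — expanding the product of the $p$ one-dimensional indicators and verifying that the inclusion--exclusion sign is exactly $\pi_s=\prod_i s_i$; everything else is immediate. The one hypothesis to track is that $\X$ possesses a density, which is what allows me to replace $\mathbbm{1}(-y_i\leq X_i\leq y_i)$ by $\mathbbm{1}(X_i\leq y_i)-\mathbbm{1}(X_i\leq -y_i)$ (ignoring the boundary atoms) and to justify the differentiation step; the coordinate hyperplanes where $|\cdot|$ fails to be smooth form a Lebesgue-null set and do not affect the density.
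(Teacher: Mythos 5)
Your proof is correct. The cdf step — writing $\mathbbm{1}(-y_i\leq X_i\leq y_i)=\sum_{s_i\in\{-1,1\}}s_i\,\mathbbm{1}(X_i\leq s_iy_i)$ (atoms being negligible since $\X$ has a density), expanding the product, and identifying the sign with $\pi_s$ — is exactly the inclusion--exclusion bookkeeping the result requires, and your pdf step recovers the stated formula. The paper relegates its proof to the supplementary appendix, but the standard argument there (extending Chakraborty and Chatterjee's folded-normal case) is the orthant decomposition you give as your ``equivalent'' alternative: $[-\yp,\yp]=\bigcup_{\mathbf{s}\in S(p)}\bLambda_s[\zero,\yp]$ up to null sets, with each $\bLambda_s$ a bijection of unit Jacobian, so the densities add with weight one, and the cdf identity follows by integrating; your primary route runs this in the reverse order (cdf first, then differentiate). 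The one point where your main line is slightly weaker is the differentiation step: $\partial^p F_\X/\partial y_1\cdots\partial y_p=f_\X$ only holds almost everywhere for a general density (no smoothness of $F_\X$ is assumed), so strictly speaking the identity $f_\Y(\yp)=\sum_{\mathbf{s}}f_\X(\bLambda_s\yp;\btheta)$ obtained this way is an a.e. statement; the change-of-variables version you append closes that gap cleanly and is the preferable formal argument, so you should lead with it rather than treat it as a remark.
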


\begin{corollary}\label{cor2}
If $\X \sim f_\X(\xp;\bxi,\bPsi)$ {belongs} to the location-scale family of distributions, with location and scale parameters $\bxi$ and $\bPsi$ respectively, then $\Z_s = \bLambda_s\X \sim f_\X(\zp;\bLambda_s\bxi,\bLambda_s\bPsi\bLambda_s)$ and consequently the joint pdf and cdf of $\Y = |\X|$ {are} given by
$$
f_{\Y}(\yp) = \sum_{\mathbf{s}\in S(p)}f_\X(\yp;\bLambda_s\bxi,\bLambda_s\bPsi\bLambda_s)
\qquad
\text{and}
\qquad
F_{\Y}(\yp) =   \sum_{\mathbf{s}\in S(p)}\pi_s
F_\X(\bLambda_s\yp;\bxi,\bPsi){,\quad\mbox{for }\yp\geq{\bf  0}.}
$$
Hence, the $\bkappa$-th moment of $\Y$ follows as
$$  \mathbb{E}[\Y^{\bkappa}] = \sum_{\mathbf{s}\in S(p)}\mathbb{E}[(\Z_s^{\bkappa})^{\scriptscriptstyle +}],$$
where $\X^{\scriptscriptstyle +}$ denotes the positive component of the random vector $\X$.
\end{corollary}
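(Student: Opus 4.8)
The plan is to build the corollary directly on Theorem \ref{theo2}, specializing its generic multivariate-folded formulas to the location-scale structure. First I would establish the transformation claim $\Z_s=\bLambda_s\X\sim f_\X(\zp;\bLambda_s\bxi,\bLambda_s\bPsi\bLambda_s)$. Since $\bLambda_s=\textrm{Diag}(\mathbf{s})$ has entries $\pm1$, it is symmetric and orthogonal, with $\bLambda_s^{-1}=\bLambda_s$ and $|\det\bLambda_s|=1$. Applying the change-of-variables formula to $\Z_s=\bLambda_s\X$ therefore gives $f_{\Z_s}(\zp)=f_\X(\bLambda_s\zp;\bxi,\bPsi)$, and invoking the defining closure property of the location-scale family (an affine map sends location $\bxi\mapsto\bLambda_s\bxi$ and scale $\bPsi\mapsto\bLambda_s\bPsi\bLambda_s^{\top}=\bLambda_s\bPsi\bLambda_s$) identifies this with $f_\X(\zp;\bLambda_s\bxi,\bLambda_s\bPsi\bLambda_s)$. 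As a by-product this yields the key identity $f_\X(\bLambda_s\yp;\bxi,\bPsi)=f_\X(\yp;\bLambda_s\bxi,\bLambda_s\bPsi\bLambda_s)$, which is exactly what converts the generic formula into the location-scale one.

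Next I would read off the pdf of $\Y=|\X|$ by substituting this identity into the generic expression $f_\Y(\yp)=\sum_{\mathbf{s}\in S(p)}f_\X(\bLambda_s\yp;\btheta)$ from Theorem \ref{theo2}, immediately producing $\sum_{\mathbf{s}\in S(p)}f_\X(\yp;\bLambda_s\bxi,\bLambda_s\bPsi\bLambda_s)$. The cdf formula needs no further manipulation: taking $\btheta=(\bxi,\bPsi)$ in the cdf part of Theorem \ref{theo2} already gives $F_\Y(\yp)=\sum_{\mathbf{s}\in S(p)}\pi_s F_\X(\bLambda_s\yp;\bxi,\bPsi)$ for $\yp\geq\zero$.

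Finally, for the moments I would integrate $\yp^{\bkappa}$ against the folded pdf over the positive orthant, interchange the (finite) sum with the integral, and recognize each summand as the positive-orthant moment of $\Z_s$:
\begin{align*}
\mathbb{E}[\Y^{\bkappa}]
&=\int_{\zero}^{\binfty}\yp^{\bkappa}f_\Y(\yp)\,\dr\yp
=\int_{\zero}^{\binfty}\yp^{\bkappa}\sum_{\mathbf{s}\in S(p)}f_{\Z_s}(\yp)\,\dr\yp\\
&=\sum_{\mathbf{s}\in S(p)}\int_{\zero}^{\binfty}\yp^{\bkappa}f_{\Z_s}(\yp)\,\dr\yp
=\sum_{\mathbf{s}\in S(p)}\mathbb{E}[(\Z_s^{\bkappa})^{+}],
\end{align*}
where the last equality reads the restricted integral $\int_{\zero}^{\binfty}\zp^{\bkappa}f_{\Z_s}(\zp)\,\dr\zp$ as $\mathbb{E}[\Z_s^{\bkappa}\mathbf{1}(\Z_s\geq\zero)]$, i.e. the positive component in the sense defined in the statement. (Equivalently, one can derive this directly by partitioning $\mathbb{R}^p$ into the $2^p$ orthants indexed by $\mathbf{s}$, using $|\xp|=\bLambda_s\xp$ on the orthant $\{s_ix_i\geq0\}$ and substituting $\up=\bLambda_s\xp$.)

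I expect the main obstacle to be the first step: cleanly justifying the parameter map $\bPsi\mapsto\bLambda_s\bPsi\bLambda_s$ purely from the location-scale closure property, and confirming the consistency of the symmetric-square-root representation $(\bLambda_s\bPsi\bLambda_s)^{-1/2}=\bLambda_s\bPsi^{-1/2}\bLambda_s$ so that the transformed density is genuinely a member of the same family. A secondary point of care is pinning down the precise meaning of the ``positive component'' notation, so that the final equality in the moment computation is an identity rather than merely a definition.
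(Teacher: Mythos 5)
Your proposal is correct and follows essentially the same route as the paper: specialize Theorem \ref{theo2} by the change of variables $\Z_s=\bLambda_s\X$ (using $\bLambda_s^{-1}=\bLambda_s$, $|\det\bLambda_s|=1$), read the cdf directly off Theorem \ref{theo2}, and obtain the moments by interchanging the finite sum with the positive-orthant integral, with $\mathbb{E}[(\Z_s^{\bkappa})^{+}]$ interpreted as $\int_{\zero}^{\binfty}\zp^{\bkappa}f_{\Z_s}(\zp)\,\dr{\zp}$, exactly as the paper uses this notation in \eqref{Imeanvar1}--\eqref{Imeanvar2} and Proposition \ref{cor:folded:ESN}. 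The square-root consistency $(\bLambda_s\bPsi\bLambda_s)^{-1/2}=\bLambda_s\bPsi^{-1/2}\bLambda_s$ that you flag as the main obstacle is indeed the delicate point, and it is settled by the uniqueness-of-positive-definite-square-root argument that the paper spells out in the proof of Proposition \ref{cor:folded:ESN}.
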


%\begin{corollary}\label{cor21}
%    Under the same conditions of Corollary \ref{cor2}, we have that
%    $$  \mathbb{E}[\Y^{\bkappa}] = \sum_{\mathbf{s}\in S(p)}\mathbb{E}[(\Z_s^{\bkappa})^{\scriptscriptstyle +}],$$
%where $\X^{\scriptscriptstyle +}$ denotes the positive component of the random vector $\X$.
%\end{corollary}

%\begin{proof}
%    The {desired result can be obtained by the basic
%    integration:}
%    \begin{align*}
%    %\mathbb{E}[\Y^{\bkappa}] =
%    \int_{\boldsymbol 0}^{\boldsymbol \infty}\y^{\bkappa}f_{\Y}(\yp) \textrm{d}\yp &= \sum_{\mathbf{s}\in S(p)}\int_{\boldsymbol 0}^{\boldsymbol \infty}\y^{\bkappa}f_\X(\yp;\bLambda_s\bxi,\bLambda_s\bPsi\bLambda_s)\textrm{d}\yp \\ &= \sum_{\mathbf{s}\in S(p)}\int_{\boldsymbol 0}^{\boldsymbol \infty}\y^{\bkappa}f_{\Z_s}\hspace{-0.5mm}(\yp)\textrm{d}\yp
%    = \sum_{\mathbf{s}\in S(p)}\mathbb{E}[(\Z_s^{\bkappa})^{\scriptscriptstyle +}].
%    \end{align*}
%\end{proof}

%\noindent Proofs of Theorem \ref{theo2} and Corollary \ref{cor2} can be found in the Appendix section.

Let $\X\sim ESN_p(\bmu,\bSigma,\blambda,\tau)$, we now turn our attention to discuss the computation of any arbitrary order moment of $|\X|$, a FESN distribution. Let define the $\II_{\bkappa}^{p} \equiv \II^p_{\bkappa}(\bmu,\bSigma,\blambda,\tau)$ function as
\begin{equation*}\label{eq:I}
\II^p_{\bkappa}(\bmu,\bSigma,\blambda,\tau)=\int_{\textbf{0}}^{\boldsymbol{\infty}}\yp^{\bkappa}ESN_p(\yp;\bmu,\bSigma,\blambda,\tau)\dr\yp.
\end{equation*}
Note that $\II^p_{\bkappa}$ is a special case of $\FF^p_{\bkappa}$ that occurs when $a_i=0$ and $b_i=+\infty$, $\ii$. In this scenario we have
$$\II^{p}_{\bkappa}(\bmu, \bSigma,\blambda, \tau )= \FF ^p_{\bkappa}({\bf 0},+\binf;\bmu, \bSigma,\blambda,\tau).$$
When $\blambda=\zero$ and $\tau=0$, that is, the normal case we write $\II^{p}_{\bkappa}(\bmu, \bSigma,\zero,0) = I^p_{\bkappa}(\bmu,\bSigma)$.
%$$^{\N}I^{p}_{\bkappa}(\bmu, \bSigma)= F ^p_{\bkappa}({\bf 0},+\binf,\bmu, \bSigma,0, 0 )$$

\begin{proposition}\label{cor:folded:ESN}
If $\X\sim \ESN_p(\bmu,\bSigma,\blambda,\tau)$, then $\Z_s = \bLambda_s\X \sim \ESN_p(\bmu_s,\bSigma_s,\blambda_s,\tau)$ and consequently the joint \emph{pdf}, \emph{cdf} and the $\bkappa$th raw moment of $\Y = |\X|$ {are}, respectively, given by
$$f_{\Y}(\yp)=\sum_{\mathbf{s}\in S(p)}ESN_p(\y_p;\bmu_s,\bSigma_s,\blambda_s,\tau),\,\,\,\,
$$
$$
F_{\Y}(\yp) = \LL_p(-\yp,\yp;\bmu,\bSigma,\blambda,\tau),
%T_p(\y_s|\bmu,\bSigma,\nu),
$$
and
$$  \mathbb{E}[\Y^{\bkappa}] = \sum_{\mathbf{s}\in S(p)}\II^p_{\bkappa}(\bmu_s,\bSigma_s,\blambda_s,\tau),\,\,\,\,\,\,\,$$
where $\y_s={\bLambda_s}\y$, $\bmu_s={\bLambda_s}\bmu$, $\bSigma_s=\bLambda_s\bSigma\bLambda_s$ and $\blambda_s=\bLambda_s\blambda$.
\end{proposition}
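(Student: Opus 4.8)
The plan is to reduce everything to the single transformation identity $\Z_s = \bLambda_s\X \sim \ESN_p(\bmu_s,\bSigma_s,\blambda_s,\tau)$ and then read off the \emph{pdf}, \emph{cdf} and moments from the general folding results of Theorem \ref{theo2} and Corollary \ref{cor2}. First I would establish the transformation by a change of variables. Since each entry of $\mathbf{s}$ is $\pm1$, the matrix $\bLambda_s=\textrm{Diag}(\mathbf{s})$ is symmetric and orthogonal, so $\bLambda_s\bLambda_s=\bI_p$, $\bLambda_s^{-1}=\bLambda_s$ and $|\det\bLambda_s|=1$. Writing $\zp=\bLambda_s\xp$, the density of $\Z_s$ is $ESN_p(\bLambda_s\zp;\bmu,\bSigma,\blambda,\tau)$, and I would massage this into an ESN density with the claimed parameters. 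Using $\bLambda_s\zp-\bmu=\bLambda_s(\zp-\bmu_s)$ and $(\bLambda_s\bSigma\bLambda_s)^{-1}=\bLambda_s\bSigma^{-1}\bLambda_s$, the Gaussian factor satisfies $\phi_p(\bLambda_s\zp;\bmu,\bSigma)=\phi_p(\zp;\bmu_s,\bSigma_s)$, because $|\bSigma_s|=|\bSigma|$ and the quadratic form is invariant.

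The crux is the skewing factor $\Phi_1(\tau+\blambda^{\top}\bSigma^{-1/2}(\bLambda_s\zp-\bmu))$, and the main obstacle is handling the symmetric square root $\bSigma^{1/2}$ correctly. The key lemma I would prove is that $\bSigma_s^{1/2}=\bLambda_s\bSigma^{1/2}\bLambda_s$: the right-hand side is symmetric, squares to $\bLambda_s\bSigma\bLambda_s=\bSigma_s$, and is positive definite (being orthogonally similar to $\bSigma^{1/2}$), hence it is \emph{the} symmetric positive-definite square root of $\bSigma_s$ by uniqueness. Inverting gives $\bLambda_s\bSigma^{-1/2}\bLambda_s=\bSigma_s^{-1/2}$, equivalently $\bSigma^{-1/2}\bLambda_s=\bLambda_s\bSigma_s^{-1/2}$. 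Substituting this identity into the skewing argument turns $\blambda^{\top}\bSigma^{-1/2}\bLambda_s(\zp-\bmu_s)$ into $(\bLambda_s\blambda)^{\top}\bSigma_s^{-1/2}(\zp-\bmu_s)=\blambda_s^{\top}\bSigma_s^{-1/2}(\zp-\bmu_s)$. Since $\blambda_s^{\top}\blambda_s=\blambda^{\top}\blambda$, the normalizing constant $\xi$ is unchanged, so $ESN_p(\bLambda_s\zp;\bmu,\bSigma,\blambda,\tau)=ESN_p(\zp;\bmu_s,\bSigma_s,\blambda_s,\tau)$, which proves $\Z_s\sim\ESN_p(\bmu_s,\bSigma_s,\blambda_s,\tau)$. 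I would stress here that the sign flip in $\blambda_s=\bLambda_s\blambda$ is exactly what a naive reading of Corollary \ref{cor2} (tailored to the symmetric Gaussian kernel) would miss, which is why this step cannot be skipped.

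With the transformation in hand, the three claims follow quickly. For the \emph{pdf}, Theorem \ref{theo2} gives $f_{\Y}(\yp)=\sum_{\mathbf{s}\in S(p)}ESN_p(\bLambda_s\yp;\bmu,\bSigma,\blambda,\tau)$, and the transformation identity rewrites each summand as $ESN_p(\yp;\bmu_s,\bSigma_s,\blambda_s,\tau)$. For the \emph{cdf} I would argue directly: for $\yp\geq\zero$, one has $\{|\X|\leq\yp\}=\{-\yp\leq\X\leq\yp\}$, so by the definition of $\LL_p$,
\begin{equation*}
F_{\Y}(\yp)=P(-\yp\leq\X\leq\yp)=\int_{-\yp}^{\yp}ESN_p(\xp;\bmu,\bSigma,\blambda,\tau)\dr\xp=\LL_p(-\yp,\yp;\bmu,\bSigma,\blambda,\tau).
\end{equation*}
Finally, for the moments I would invoke $\mathbb{E}[\Y^{\bkappa}]=\sum_{\mathbf{s}\in S(p)}\mathbb{E}[(\Z_s^{\bkappa})^{+}]$ from Corollary \ref{cor2} (or, equivalently, split $\mathbb{R}^p$ into its $2^p$ orthants and substitute $\zp=\bLambda_s\xp$ on each), and observe that $\mathbb{E}[(\Z_s^{\bkappa})^{+}]=\int_{\zero}^{+\binfty}\zp^{\bkappa}ESN_p(\zp;\bmu_s,\bSigma_s,\blambda_s,\tau)\dr\zp=\II^p_{\bkappa}(\bmu_s,\bSigma_s,\blambda_s,\tau)$, which yields the stated moment formula and completes the proof.
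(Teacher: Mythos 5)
Your proof is correct and follows essentially the same route as the paper: both reduce everything to the transformation identity $\bLambda_s\X \sim \ESN_p(\bmu_s,\bSigma_s,\blambda_s,\tau)$, whose crux in each case is the same key lemma that $(\bLambda_s\bSigma\bLambda_s)^{1/2}=\bLambda_s\bSigma^{1/2}\bLambda_s$ by uniqueness of the symmetric positive-definite square root, after which the \emph{pdf}, \emph{cdf} and moment formulas follow from Theorem \ref{theo2} and Corollary \ref{cor2} exactly as the paper deems ``straightforward.'' Your only departures are cosmetic improvements: you justify positive definiteness via orthogonal similarity rather than the paper's appeal to the SVD, and you spell out the direct \emph{cdf} argument $\{|\X|\leq\yp\}=\{-\yp\leq\X\leq\yp\}$ that the paper leaves implicit.
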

\begin{proof}
Note that is suffices to show that,
\begin{center}\emph{if} $\X\sim \ESN_p(\bmu,\bSigma,\blambda,\tau)$, \emph{then} $\Z_s = \bLambda_s\X \sim \ESN_p(\bmu_s,\bSigma_s,\blambda_s,\tau)$,\end{center}
since the rest of the corollary is straightforward. We have that
\small{\begin{align}
ESN_p(\xp;\bmu_s,\bSigma_s,\blambda_s,\tau) &= \xi^{-1}\phi_p(\mathbf{x};\bLambda_s\bmu,\bLambda_s\bSigma\bLambda_s)
\times\Phi_1\big(\tau+(\bLambda_s\blambda)^{\top}(\bLambda_s\bSigma\bLambda_s)^{-1/2}(\mathbf{x}-\bLambda_s\bmu)\big)\nonumber\\
%\Phi^{-1}(\tilde{\tau})
%&= \xi^{-1}\phi_p(\bLambda_s\mathbf{x};\bmu,\bSigma)
%	\times\Phi\hspace{-0.8mm}\left(\tau+\blambda^{\top}\bLambda_s(\bLambda_s\bSigma\bLambda_s)^{-1/2}\bLambda_s(\bLambda_s^{-1}\mathbf{x}-\bmu)\right) \nonumber\\
&= \xi^{-1}
|\bLambda_s\bLambda_s|^{1/2}
\phi_p(\bLambda_s^{-1}\mathbf{x};\bmu,\bSigma)\times
\Phi_1\big(\tau+\blambda^{\top}\bLambda_s(\bLambda_s\bSigma\bLambda_s)^{-1/2}\bLambda_s(\bLambda_s^{-1}\mathbf{x}-\bmu)\big)\nonumber\\
&= \xi^{-1}
\phi_p(\bLambda_s\mathbf{x};\bmu,\bSigma) \times\Phi_1\big(\tau+\blambda^{\top}\bLambda_s(\bLambda_s\bSigma\bLambda_s)^{-1/2}\bLambda_s(\bLambda_s\mathbf{x}-\bmu)\big)\label{proofESN1}\\
&\overset{?}{=} \xi^{-1}\phi_p(\bLambda_s\mathbf{x};\bmu,\bSigma)
\times\Phi_1\big(\tau+\blambda^{\top}\bSigma^{-1/2}(\bLambda_s\mathbf{x}-\bmu)\big)\label{proofESN2}\\
&= ESN_p(\bLambda_s\xp;\bmu,\bSigma,\blambda,\tau),\nonumber
\end{align}}
where $\xi^{-1}=\Phi_1\big(\tau/\sqrt{1+\blambda_s^{\scriptscriptstyle\top}\blambda_s}\big)$ due to $\blambda_s^{\top}\blambda_s = \blambda^{\top}\blambda$.

In order to equalize \eqref{proofESN1} and \eqref{proofESN2}, we see that it suffices to show that $\bSigma^{-1/2}=\bLambda_s(\bLambda_s\bSigma\bLambda_s)^{-1/2}\bLambda_s$. This is equivalent to show that $\A=\B$ for $\A=(\bLambda_s\bSigma\bLambda_s)^{1/2}$ and $\B=\bLambda_s\bSigma^{1/2}\bLambda_s$. We have that both matrices $\A$ and $\B$ are positive-definite matrices since $(\bLambda_s\bSigma\bLambda_s)^{1/2}$ and $\bSigma^{1/2}$ are too, as a consequence that they are obtained using Singular Value Decomposition (SVD). Finally, given that $\A^2=\B^2=\bLambda_s\bSigma\bLambda_s$ and any positive-definite matrix has an unique positive-definite square root, we conclude that $\A=\B$ by uniqueness, which concludes the proof.
\end{proof}

\begin{remark}
As a consequence of Proposition \ref{cor:folded:ESN}, we also have the new vectors
${\bdelta}_s = \bLambda_s\bdelta$, ${\bmu_b}_s = \bLambda_s\bmu_b$, ${\bvarphi}_s = \bLambda_s\bvarphi$, ${\tilde\bvarphi}_s = \bLambda_s\tilde\bvarphi$, ${{\tilde\bmu}^{\ap}}_{js} = \bLambda_{s(j)}{\tilde\bmu^{\ap}_j}$ and ${{\tilde\bmu}^{\bp}}_{js} = \bLambda_{s(j)}{\tilde\bmu^{\bp}_j}$, and matrix
$\bGamma_s  = \bLambda_s\bGamma\bLambda_s$, while the constants $\xi$, $\eta$, $c_j$ ,$\tilde\bSigma_j$, and $\tilde\tau_j$ remain invariant with respect to $\bs$.
%  \begin{align*}
%    \bPsi_s    &= \bLambda_s\bPsi\bLambda_s \\
%    \bGamma_s  &= \bLambda_s\bGamma\bLambda_s \\
%    {\bdelta}_s     &= \bLambda_s\bdelta \\
%    {\bmu_b}_s     &= \bLambda_s\bmu_b \\
%    {\bvarphi}_s     &= \bLambda_s\bvarphi \\
%    {\tilde\bvarphi}_s     &= \bLambda_s\tilde\bvarphi \\
%    {{\tilde\bmu}^{\ap}}_{js} &= \bLambda_{s(j)}{\tilde\bmu^{\ap}_j} \\
%    {{\tilde\bmu}^{\bp}}_{js} &= \bLambda_{s(j)}{\tilde\bmu^{\bp}_j} \\
%  \end{align*}
\end{remark}

From Proposition \ref{cor:folded:ESN}, we can compute any arbitrary moment of a FESN distribution as a sum of $\II_{\bkappa}^{p}$ integrals. In light of Theorem \ref{ESN_theo}, the recurrence relation for $\II_{\bkappa}^{p}$ can be written as
\begin{equation}\label{folded:rec:I:ESN}
\II^{p}_{\bkappa+\ep_i}(\bmu, \bSigma,\blambda, \tau )=\mu_i \II^{p}_{\bkappa}(\bmu, \bSigma,\blambda, \tau )+\delta_i I^{p}_{\bkappa}(\bmu-\bmu_b,\bGamma)+\sum_{j=1}^{p} \sigma_{ij} d_{\kappa,j},\,\,\ii,
\end{equation}
where
\begin{equation*}
d_{\kappa,j}=\left\{\begin{array}{ll}
k_j\II^{p}_{\bkappa-\ep_i}(\bmu, \bSigma,\blambda, \tau ) &;\,\,\,\,\mbox{for}\,\, k_j>0\\
ESN_1(0|\mu_j,\sigma^2_j, c_j\sigma_j\tilde{\varphi}_j,c_j\tau) \II^{p-1}_{\bkappa_{(j)}}(\tilde \bmu_j,\tilde\bSigma_j,\tilde\bSigma^{\scriptscriptstyle 1/2}_j\bvarphi_{(j)},\tilde \tau_j)&;\,\,\,\,\mbox{for}\,\, k_j=0
\end{array}\right.
\end{equation*}
with $\tilde\bmu_j=\bmu_{(j)}-\frac{\mu_j}{\sigma^2_j}\bSigma_{(j)j}$  and $\tilde\tau_j=\tau-\tilde\varphi_j\mu_j$.\\

It is also possible to use the normal relation in Theorem \ref{theo:FF_to_F} to compute $\mathbb{E}[|\X|^{\bkappa}]$ in a simpler manner as in next proposition.

\begin{proposition}\label{folded}
Let $\Y = |\X|$, with $\X \sim ESN_{p}(\bmu,\bSigma,\blambda,\tau)$. In light of Theorem \ref{theo2}, It follows that
\begin{equation*}\label{II to I}
\mathbb{E}[\Y^{\bkappa}] = \xi^{-1}\sum_{\mathbf{s}\in S(p)}I_{\bkappa^*}^{p+1}(\bmu_s^*,\bOmega_s^{-}),
\end{equation*}
where $I^p_{\bkappa}(\bmu,\bSigma) \equiv F^p_{\bkappa}(\zero,\boldsymbol{\infty};\bmu,\bSigma)$,
%$\xi=\Phi(\tilde{\tau})$, $\bkappa^{*} = (\bkappa^{\scriptscriptstyle\top},0)^{\top}$,
$\bmu^{*}_s = (\bmu_s^{\scriptscriptstyle\top},\tilde\tau)^{\top}$ and $\bOmega_s = \left(\begin{array}{cc}
\bSigma_s & -\bDelta_s \\
-\bDelta_s^{\top} & 1
\end{array}
\right)$, with $\bmu_s=\CG{\bLambda_s}\bmu$, $\bSigma_s=\bLambda_s\bSigma\bLambda_s$, $\bDelta_s=\bLambda_s\bDelta$ and $\bOmega^{-}_s$ standing for the block matrix $\bOmega_s$ with all its off-diagonal block elements signs changed.
\end{proposition}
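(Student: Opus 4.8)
The plan is to chain the two structural results already in hand. Proposition \ref{cor:folded:ESN} reduces the folded moment to a sum over the sign set $S(p)$,
$$\mathbb{E}[\Y^{\bkappa}] = \sum_{\mathbf{s}\in S(p)}\II^p_{\bkappa}(\bmu_s,\bSigma_s,\blambda_s,\tau),$$
and the identification recorded just above that proposition, $\II^p_{\bkappa}(\cdot)=\FF^p_{\bkappa}(\zero,+\binf;\cdot)$, recasts each summand as a doubly-truncated ESN integral over the positive orthant. First I would apply the normal relation of Theorem \ref{theo:FF_to_F} to each term, giving $\FF^p_{\bkappa}(\zero,+\binf;\bmu_s,\bSigma_s,\blambda_s,\tau)=\xi^{-1}F^{p+1}_{\bkappa^*}(\ap^*,\bp^*;\bmu_s^\circ,\bOmega_s)$ with $\bmu_s^\circ=(\bmu_s^{\top},0)^{\top}$, $\bkappa^*=(\bkappa^{\top},0)^{\top}$, and limits $\ap^*=(\zero^{\top},-\infty)^{\top}$, $\bp^*=(+\binf^{\top},\tautil)^{\top}$. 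Because $\blambda_s^{\top}\blambda_s=\blambda^{\top}\blambda$ — a fact already exploited in the proof of Proposition \ref{cor:folded:ESN} — the constant $\xi$ is the same for every $\mathbf{s}$ and factors out of the sum.

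Each term is now a $(p+1)$-variate normal integral over $[0,\infty)^p\times(-\infty,\tautil]$, whereas the target $I^{p+1}_{\bkappa^*}(\bmu_s^*,\bOmega_s^{-})$ integrates over the full orthant $[\zero,+\binf]$. The crux is a change of variables reflecting only the last coordinate: setting $u=\tautil-x_{p+1}$ maps $(-\infty,\tautil]$ onto $[0,\infty)$ while fixing the remaining coordinates and their limits, so the integration region becomes $[\zero,+\binf]$. This reflection is the linear map with matrix $\textrm{Diag}(\bI_p,-1)$ together with the shift $(\zero^{\top},\tautil)^{\top}$; its Jacobian has absolute value one, the last mean component moves from $0$ to $\tautil$, yielding $\bmu_s^*=(\bmu_s^{\top},\tautil)^{\top}$, and the covariance becomes $\textrm{Diag}(\bI_p,-1)\,\bOmega_s\,\textrm{Diag}(\bI_p,-1)$. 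Since $\bkappa^*$ carries a zero exponent in its last slot, the monomial $\xp^{\bkappa^*}$ is untouched by the reflection, so the integrand retains its form.

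The only genuine verification — and thus the step I expect to require the most care — is checking that this conjugation yields exactly the matrix $\bOmega_s^{-}$ of the statement, namely $\bOmega_s$ with its two off-diagonal blocks negated. This is the one-line block computation
$$\textrm{Diag}(\bI_p,-1)\left(\begin{array}{cc}\bSigma_s & -\bDelta_s \\ -\bDelta_s^{\top} & 1\end{array}\right)\textrm{Diag}(\bI_p,-1) = \left(\begin{array}{cc}\bSigma_s & \bDelta_s \\ \bDelta_s^{\top} & 1\end{array}\right)=\bOmega_s^{-},$$
where $\bDelta_s=\bLambda_s\bDelta$ follows from the identity $\bSigma_s^{1/2}=\bLambda_s\bSigma^{1/2}\bLambda_s$ established in the proof of Proposition \ref{cor:folded:ESN}. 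Assembling the pieces, each summand equals $\xi^{-1}I^{p+1}_{\bkappa^*}(\bmu_s^*,\bOmega_s^{-})$; summing over $\mathbf{s}\in S(p)$ and pulling out the common $\xi^{-1}$ produces the claimed expression, completing the argument.
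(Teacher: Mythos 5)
Your proof is correct and follows essentially the same route as the paper, which simply states that the result is ``direct from Theorem \ref{theo:FF_to_F} as $\II^p_{\bkappa}$ is a special case of $\FF^p_{\bkappa}$'': you chain Proposition \ref{cor:folded:ESN} with the normal relation and then convert the region $[\zero,\binfty]\times(-\infty,\tilde\tau]$ into the full positive orthant. The reflection $u=\tilde\tau-x_{p+1}$, the invariance of $\xi$ across $\mathbf{s}$, and the block computation giving $\bOmega_s^{-}$ are exactly the details the paper leaves implicit, and you verify them correctly.
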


%\begin{proof}
%From corollary \ref{cor:folded:ESN}, we have that
%\begin{align*}
%\mathbb{E}[\Y^{\bkappa}] &= \sum_{\mathbf{s}\in S(p)}\FF^p_{\bkappa}(\zero,\boldsymbol{\infty};\bmu_s,\bSigma_s,\blambda_s,\tau),\\
%&\overset{\eqref{FF_to_F}}{=} \xi^{-1}\sum_{\mathbf{s}\in S(p)}F_{\bkappa^*}^{p+1}\big((\zero_p^{\scriptscriptstyle\top},-\infty)^{\top},(\binfty_p^{\scriptscriptstyle\top},-\tilde{\tau})^{\top};
%(\bmu_s^{\scriptscriptstyle\top},0)^{\top},\bOmega_s\big),\\
%&= \xi^{-1}\sum_{\mathbf{s}\in S(p)}F_{\bkappa^*}^{p+1}\big((\zero_p^{\scriptscriptstyle\top},-\tilde{\tau})^{\top},\binfty_{p+1};
%(\bmu_s^{\scriptscriptstyle\top},0)^{\top},\bOmega_s^{-}\big),\\
%&= \xi^{-1}\sum_{\mathbf{s}\in S(p)}F_{\bkappa^*}^{p+1}\big(\zero_{p+1},\binfty_{p+1};(\bmu_s^{\scriptscriptstyle\top},\tilde{\tau})^{\top},\bOmega_s^{-}\big),\\
%&= \xi^{-1}\sum_{\mathbf{s}\in S(p)}I_{\kappa^*}^{p+1}(\bmu_s^*,\bOmega_s^{-}).
%\end{align*}
%This ends the proof.
%\end{proof}

Proof is direct from Theorem \ref{theo:FF_to_F} as $\II^p_{\bkappa}$ is a special case of $\FF^p_{\bkappa}$. From Proposition \ref{cor2}, we have that the mean and variance-covariance matrix can be {calculated} as a sum of $2^p$ terms as well, that is
%\begin{align}\label{Irecur}
%  \mathbb{E}[\Y^{\bkappa}] &= \sum_{\mathbf{s}\in S(p)}\int_{\textbf{0}}^{\boldsymbol{\infty}}\yp^{\bkappa}t_p(\yp|\bmu_s,\bSigma_s,\nu)\textrm{d}\yp=\sum_{\mathbf{s}\in S(p)}I^p_{\bkappa}(\bmu_s,\bSigma_s,\nu).
%\end{align}
\begin{align}
\label{Imeanvar1}
\mathbb{E}[\Y] &= \sum_{\mathbf{s}\in S(p)}\mathbb{E}[\Z_s^{\scriptscriptstyle +}],\\
\label{Imeanvar2}
\mathrm{cov}[\Y] &= \sum_{\mathbf{s}\in S(p)}\mathbb{E}\hspace{-0.8mm}\left[\Z_s^{\scriptscriptstyle +}{\Z_s^{\scriptscriptstyle +}}^{\scriptscriptstyle\top}\right]
- \mathbb{E}[\Y]\mathbb{E}[\Y]^{\top},
\end{align}
{where $\Z_s^{\scriptscriptstyle +}$ is the positive component of} $\Z_s=\bLambda_s\X\sim \ESN_p(\bmu_s,\bSigma_s,\blambda_s,\tau)$. Note that {there are} $2^p$ times more integrals {to} be calculated {as compared to} the non-folded case, representing a huge computational effort for high dimensional problems.
%Specifically, $(2p+2)\times2^p$ integrals {would be required} for the mean vector{,} and an additional $(2p^2 + 4p)\times2^p$ integrals for the variance-covariance matrix.

In order to circumvent this, we can use the fact that $\EE[\Y] = (\EE[Y_1],\ldots,\EE[Y_p])^\top$ and the elements of $\EE[\Y\Y^\top]$ are given by the second moments $\EE[Y_i^2]$ and $\EE[Y_i Y_j], \ 1\leq i \neq j \leq p$. Thus, it is possible to calculate explicit expressions for the mean vector and variance-covariance matrix of the FESN only based on the marginal univariate means and variances of $Y_i$, as well as the covariance terms $\mathrm{cov}(Y_i,Y_j)$.

Next, we circumvent this situation by  propose explicit expressions for the mean and the variance-covariance of the multivariate FESN distribution.

\subsection{Explicit expressions for mean and covariance matrix of multivariate folded
ESN distribution}\label{subsec:explicit}

\noindent
Let $\X \sim ESN_p(\bmu,\bSigma,\blambda,\tau)$.  To obtain
the mean and covariance matrix of $|\X|$ boils down to
compute $\mathbb{E}[|X_i|]$, $\mathbb{E}[|X_i^2|]$ and $\mathbb{E}[|X_i X_j|]$. Consider $X_i$ to be the $i$-th marginal partition of $\X$ distributed as $X_i\sim \ESN(\mu_i,\sigma_i^2,\lambda_i,\tau_i)$. In light of Proposition \ref{cor:folded:ESN} it follows that
\begin{equation*}
\mathbb{E}[|X_i|^k] = \II_k^1(\mu_i,\sigma_i^2,\lambda_i,\tau_i)+\II_k^1(-\mu_i,\sigma_i^2,-\lambda_i,\tau_i).
\end{equation*}
Thus, using the recurrence relation on $\II_k$ in \eqref{folded:rec:I:ESN}, and following the notation in Subsection \ref{univariate_rec}, we can write explicit expressions for $\mathbb{E}[|X_i|]$ and $\mathbb{E}[|X_i|^2]$. High order moments for the univariate FESN and others related distributions are detailed in Appendix  B.

%established that
%\begin{align*}
%   \mathbb{E}[|X_i|] &= \mu\left[1-
%  2\tilde{\Phi}_p(\zero;\bmu,\bSigma,\blambda,\tau)\right]+2\sigma
%   \left[\phi(\tilde{\mu})\frac{\Phi(\tau-\lambda\tilde{\mu})}{\Phi(\tilde{\tau})}\right]
%   +
%  \lambda\gamma\sigma\left[1-2\Phi\left(\frac{\lambda\tau}{\sigma}-\frac{(1+\lambda^{2})\tilde{\mu}}{\sigma}\right)\right],\\
%  \mathbb{E}[|X_i|^2] &= \mu^2+\sigma^2+\lambda\gamma\sigma\left[2\mu-\frac{\lambda\tau\sigma}{1+\lambda^2}\right].
%\end{align*}
It remains to obtain $\mathbb{E}[|X_iX_j|]$ for $i \neq j$,
which can be obtained as
\begin{align}\label{folded:4terms}
\mathbb{E}[|X_i X_j|] =& \II_{1,1}^{2}(\mu_i,\mu_j,\sigma_i^2,\sigma_{ij},\sigma_j^2,\lambda_i,\lambda_j,\tau)+
\II_{1,1}^{2}(\mu_i,-\mu_j,\sigma_i^2,-\sigma_{ij},\sigma_j^2,\lambda_i,-\lambda_j,\tau)\nonumber\\
&+
\II_{1,1}^{2}(-\mu_i,\mu_j,\sigma_i^2,-\sigma_{ij},\sigma_j^2,-\lambda_i,\lambda_j,\tau)+
\II_{1,1}^{2}(-\mu_i,-\mu_j,\sigma_i^2,\sigma_{ij},\sigma_j^2,-\lambda_i,-\lambda_j,\tau),
\end{align}
as pointed in Proposition \ref{cor:folded:ESN}, with $(X_i,X_j)$ denoting an arbitrary bivariate partition of $\X$. Without loss of generality, let's consider the partition $(X_1,X_2)\sim \ESN_2(\bmu,\bSigma,\blambda,\tau)$ and $(W_1,W_2)\sim N_2(\mathbf{m},\bGamma)$ with $\mathbf{m}=\bmu-\bmu_b$.
For simplicity, we denote $\II_{1,1}^{2} \equiv \II_{1,1}^{2}(\bmu,\bSigma,\blambda,\tau)$, and the normalizing constants
$\mathcal{L}_2 \equiv\mathcal{L}_2(\zero,\binfty;\bmu,\bSigma,\blambda,\tau)$ and $L_2 \equiv {L}_2(\zero,\binfty;\bmu-\bmu_b,\bGamma)$.

Using the recurrence relation on $\II^{2}_{\bkappa+\ep_i}$ in \eqref{folded:rec:I:ESN}, we can obtain $\II_{1,1}^{2}$
%Using the recurrence relation on $\II^{2}_{\bkappa+\ep_i}$ in \eqref{folded:rec:I:ESN}, we can obtain $\II_{1,1}^{2}$ in two ways. When $\bkappa=(0,1)^{\top}$ and $\ep_1=(1,0)^\top$, we have that
%\begin{align}\label{folded:I11:1}
%\II_{1,1}^{2}=& (\mu_i\mu_j + \sigma_{ij})\mathcal{L}_2
%+ \left(\delta_i(\mu_j - \mu_{bj}) + \delta_j\mu_i\right){L}_2
% + (\sigma_i^2 + \mu_i\sigma_{ij})\tilde{\phi}^{(i)}\tilde{\Phi}^{(i)} +
%\mu_i \sigma_j^2\tilde{\phi}^{(j)}\tilde{\Phi}^{(j)} \nonumber\\
%&+ \gamma_{ij}\phi(\mu_i;\mu_{bi},\gamma_i^2)(1-\Phi(0;\tilde{m}_i,\tilde{\gamma}_i))+ \gamma_j^2\phi(\mu_j;\mu_{bj},\gamma_j^2)(1- \Phi(0;\tilde{m}_j,\tilde{\gamma}_j))),
%\end{align}
for $\bkappa=(1,0)^{\top}$ and $\ep_2=(0,1)^\top$ as
{\small \begin{align}\label{folded:I11:2}
\II_{1,1}^{2}=& (\mu_1\mu_2 + \sigma_{12})\mathcal{L}_2
+ \left(\delta_1\mu_2 + \delta_2(\mu_1 - \mu_{b1})\right){L}_2
+ (\mu_2 \sigma_1^2 + \sigma_{12}) \tilde{\phi}^{(1)}(1-\tilde{\Phi}^{(2.1)}) +
\mu_2\sigma_{12}\tilde{\phi}^{(2)}(1-\tilde{\Phi}^{(1.2)})
\nonumber\\
&+ \delta_2\left[\gamma_1^2\phi(\mu_1;\mu_{b1},\gamma_1^2)(1-\Phi(0;{m}_{2.1},{\gamma}_{2.1}^2))+ \gamma_{12}\phi(\mu_2;\mu_{b2},\gamma_2^2)(1- \Phi(0;{m}_{1.2},{\gamma}_{1.2}^2)))\right]\nonumber\\
&+\sigma_2^2\tilde{\phi}^{(2)}
\II_1^1(\mu_{1.2},\sigma^2_{11.2},\sigma_{11.2}\varphi_1,\tau_{1.2}),\nonumber
\end{align}}
where ${m}_{2.1} = m_2 - \gamma_{12}m_1/\gamma_1^2$,
${m}_{1.2} = m_{1} - \gamma_{12}m_2/\gamma_2^2$,
$\gamma_{2.1}^2 = \gamma_{2}^2 - \gamma_{12}/\gamma_1^2$, $\gamma_{1.2}^2 = \gamma_{1}^2 - \gamma_{12}/\gamma_2^2$,
%with $m_1 = \mu_1-\mu_{bi}$ and $m_2 = \mu_2-\mu_{bj}$,
and in light of Proposition \ref{proposition2}  \ we have that \ 
$\tilde{\Phi}^{(2.1)} \equiv \tilde{\Phi}_1(0;\mu_{2.1},\sigma^2_{2.1},\sigma_{2.1}\varphi_2,\tau_{2.1})$, \ 
$\tilde{\Phi}^{(1.2)} \equiv \tilde{\Phi}_1(0;\mu_{1.2},\sigma^2_{1.2},\sigma_{1.2}\varphi_1,\tau_{1.2})$, and $\tilde{\phi}^{(\ell)} \equiv ESN_1(0;\mu_\ell,\sigma^2_\ell,c_\ell\sigma_\ell\tilde{\varphi}_\ell,c_\ell\tau)$ for $\ell=\{1,2\}$.
%\begin{align}\label{folded:I11:2}
%\II_{1,1}^{2}=& (\mu_i\mu_j + \sigma_{ij})\mathcal{L}_2
%+ \left(\delta_i\mu_j + \delta_j(\mu_i - \mu_{bi})\right){L}_2
% + (\mu_j \sigma_i^2 + \sigma_{ij}) \tilde{\phi}^{(i)}P(Y_{j.i}>0) +
%\mu_j\sigma_{ij}\tilde{\phi}^{(j)}P(Y_{i.j}>0)
% \nonumber\\
%&+ \delta_j\left[\gamma_i^2\phi(\mu_i;\mu_{bi},\gamma_i^2)P(X_{j.i}>0)+ \gamma_{ij}\phi(\mu_j;\mu_{bj},\gamma_j^2)P(X_{i.j}>0)\right]
%+\sigma_j^2\tilde{\phi}^{(j)}
%{E}[Y_{i.j}^{+}]
%\end{align}
%\begin{align}\label{folded:I11:2}
%\II_{1,1}^{2}=& (\mu_1\mu_2 + \sigma_{12})\mathcal{L}_2
%+ \left(\delta_1\mu_2 + \delta_2(\mu_1 - \mu_{b1})\right){L}_2
% + (\mu_2 \sigma_1^2 + \sigma_{12}) f_{Y_1}(0)P(Y_{2}>0|Y_1=0) +
%\mu_2\sigma_{12}f_{Y_2}(0)P(Y_{1}>0|Y_2=0)
% \nonumber\\
%&+ \delta_2\left[\gamma_1^2f_{X_1}(0)P(X_{2}>0|Y_1=0)+ \gamma_{12}f_{Y_2}(0)P(X_{1}>0|Y_2=0)\right]
%+\sigma_2^2f_{Y_2}(0)
%{E}[Y_{1}^{+}|Y_2=0]
%\end{align}
%where $\tilde{m}_i = m_j - \gamma_{ij}m_i/\gamma_i^2$,
%$\tilde{m}_j = m_{i} - \gamma_{ij}m_j/\gamma_j^2$,
%$\tilde\gamma_i = \gamma_{j}^2 - \gamma_{ij}/\gamma_i^2$
% and $\tilde\gamma_j = \gamma_{i}^2 - \gamma_{ij}/\gamma_j^2$ with $m_i = \mu_i-\mu_{bi}$ and $m_j = \mu_j-\mu_{bj}$, and $\tilde{\phi}^{(\ell)} \equiv ESN(0;\mu_\ell,\sigma^2_\ell,c_\ell\sigma_\ell\tilde{\varphi}_\ell,c_\ell\tau)$ for $\ell=\{i,j\}.$

Using Remark 1 along with \eqref{folded:4terms}, we finally obtain an explicit expression for $\mathbb{E}[|X_iX_j|]$ as
%\begin{align}
%\mathbb{E}[|Y_iY_j|] =& (1-2({\Phi}^{(1)}+{\Phi}^{(2)}))(\delta_i(2\mu_j+\mu_{bj}) + \delta_j(2\mu_i-3\mu_{bi}))\nonumber\\\nonumber
%&+ 2[(\sigma_{ii}-(\mu_i\sigma_{ij}-3\mu_j\sigma_{ii})(1-2 \tilde{\Phi}^{(i)}))\tilde{\phi}^{(i)}
%- ((\mu_i\sigma_{jj}-3\mu_j\sigma_{ij})
%(1-2 \tilde{\Phi}^{(j)})+3\sigma_{jj})\tilde{\phi}^{(j)}
%]\\\nonumber
%&-2[(\delta_i\gamma_{ij}-2\delta_j\gamma_i^2)\phi(\mu_i;\mu_{bi},\gamma_i^2)
%(1-2 \Phi_1(0;\tilde{m}_i,\tilde{\gamma}_i))
%+(\delta_i\gamma_j^2-2\delta_j\gamma_{ij})
%\phi(\mu_j;\mu_{bj},\gamma_j^2)\\
%&\times (1-2 \Phi_1(0;\tilde{m}_j,\tilde{\gamma}_j))],
%\end{align}
\begin{align}
\mathbb{E}[|X_iX_j|] =& (\mu_i\mu_j + \sigma_{ij})(1-2(\tilde{\Phi}^{(i)}+\tilde{\Phi}^{(j)}))\nonumber
+ \left(\delta_i\mu_j + \delta_j(\mu_i - \mu_{bi})\right)(1-2({\Phi}^{(i)}+{\Phi}^{(j)}))\\
&+ 2\mu_j\left[\sigma_i^2\tilde{\phi}^{(i)}(1-2\tilde{\Phi}^{(i)}) + \sigma_{ij}\tilde{\phi}^{(j)}(1-2\tilde{\Phi}^{(j)})\right]
\nonumber\\
&+ 2\delta_j\left[\gamma_i^2\phi(\mu_i;\mu_{bi},\gamma_i^2)(1-2\Phi(0;{m}_{j.i},{\gamma}_{j.i}^2))+ \gamma_{ij}\phi(\mu_j;\mu_{bj},\gamma_j^2)(1-2\Phi(0;{m}_{i.j},{\gamma}_{i.j}^2))\right]\nonumber\\
&+2\sigma_j^2\tilde{\phi}^{(j)}\mathbb{E}[|Y_{i.j}|],\nonumber
\end{align}
with $X_{i.j}\sim\ESN_i(\mu_{i.j},\sigma^2_{i.j},\sigma_{i.j}\varphi_i,\tau_{i.j})$. Furthermore,
\begin{equation*}
\tilde{\Phi}^{(1)} \equiv \tilde{\Phi}_2(\zero;(-\mu_i,\mu_j)^\top,\bSigma^{-},(-\lambda_i,\lambda_j)^\top,\tau)
\text{,}\qquad
\tilde{\Phi}^{(2)} \equiv \tilde{\Phi}_2(\zero;(\mu_i,-\mu_j)^\top,\bSigma^{-},(\lambda_i,-\lambda_j)^\top,\tau),
\end{equation*}
\vspace{-0.7cm}
\begin{equation*}
{\Phi}^{(1)} \equiv {\Phi}_2(\zero;(-m_i,m_j)^\top,\bGamma^{-})
\qquad\text{and}\qquad
{\Phi}^{(2)} \equiv {\Phi}_2(\zero;(m_i,-m_j)^\top,\bGamma^{-}),
\end{equation*}

\noindent with $\bSigma^{-}$ ($\bGamma^{-}$) denoting the $\bSigma = [\sigma_{ij}]$ ($\bGamma = [\gamma_{ij}]$) matrix with all its signs of covariances (off-diagonal elements) changed.
Here, we have simplified using the equivalences
\begin{align*}
\mathcal{L}_p(\zero,\binfty;\bmu,\bSigma,\blambda_s,\tau)
&= \tilde{\Phi}_p(\zero;-\bmu_s,\bSigma_s,-\blambda_s,\tau),\qquad\qquad\qquad \text{for }\,\mathbf{s}\in S(p) \\
ESN_p(\zero;\bmu_q,\bSigma_q,\blambda_q,\tau) &= ESN_p(\zero;\bmu_r,\bSigma_r,\blambda_r,\tau),\qquad\qquad\qquad \text{for }\,\mathbf{q},\mathbf{r} \in S(p)\\
P(Y_1Y_2\cdots Y_p>0) &= \sum_{\bs \in S(p)}\pi_s\mathcal{L}_p(\zero,\binfty;\bmu_s,\bSigma_s,\blambda_s,\tau) ,
\end{align*}
with $\pi_{s} = \prod_{i=1}^{p}s_i$ as in Theorem \ref{theo2} and $\sum_{\bs \in S(p)}\mathcal{L}_p(\zero,\binfty;\bmu_s,\bSigma_s,\blambda_s,\tau) = 1$. It is worth mentioning that these expressions hold for the normal case, when $\blambda=\zero$ and $\tau=0$.

As expected, this approach is much faster than the one using equations \eqref{Imeanvar1} and \eqref{Imeanvar2}. For instance, when we consider a trivariate folded ESN distribution, we have that it is approximately 56x times faster than using MC methods and 10x times faster than using equations \eqref{Imeanvar1} and \eqref{Imeanvar2}. Time comparison (summarized in the Figure %\ref{timecomp}
in the Supplementar material, right panel) as well as sample codes of our {\sf MomTrunc R} package are provided in the Appendices C and D, respectively.

%-------------------------------------------------------------------------------------------------------
\section{Conclusions}\label{Conclusion}
In this paper, we have developed a recurrence approach for computing order product
moments of TESN and FESN distributions as well as explicit expressions for the first two moments as a byproduct, generalizing results obtained by \cite{kan2017moments} for the normal case. The proposed methods also includes the moments of the well-known truncated multivariate SN distribution, introduced by \cite{AzzaliniDV1996}. For the TESN, we have proposed an optimized robust algorithm based only in normal integrals, which for the limiting normal case outperforms the existing popular method for computing the first two moments, even computing these two moments for extreme cases where all available algorithms fail. The proposed method (including its limiting and special cases) has been coded and implemented in the {\sf{R} MomTrunc} package, which is available for {the users} on CRAN repository.

During the last decade or so, censored modeling approaches have been used in
various ways to accommodate increasingly complicated applications. Many of these extensions involve
using Normal (\cite{vaida2009fast}) and Student-t (\cite{Matos.SINICA,lachos2017finite}), however statistical models based on distributions to accommodate
censored and skewness, simultaneously, so far  have remained relatively unexplored in the statistical
literature. We hope that by making the codes available to the community, we will encourage researchers of different fields to use our newly methods. For instance, now it is possible to
derive analytical expressions on the E-step of the EM algorithm for multivariate SN responses with censored observation asblur in \cite{Matos.SINICA}.

Finally, we anticipate in a near future to extend these results to the extended skew-t distribution (\cite{AzzaliniC2003}). We conjecture that our method can be extended to the context of the family of other scale mixtures of skew-normal distributions (\cite{BrancoD2001}). An in-depth investigation of such extension is beyond the scope of the present paper, but it is an interesting topic for further
research.

%-------------------------------------------------------------------------------------------------------
\begin{center}
{\large\bf SUPPLEMENTARY MATERIAL}
\end{center}

%-------------------------------------------------------------------------------------------------------
The Supplementary Materials, which is available upon request, contains the following two files:

	\begin{description}
		\item[A] Proofs of propositions and theorems;
		\item[B] Explicit expressions for moments of some folded univariate distributions;
		\item[C] Figures;
		\item[D] The {\sf R MomTrunc} package.
	\end{description}

\bibliographystyle{unsrt}  
%\bibliography{references}  %%% Remove comment to use the external .bib file (using bibtex).

\begin{thebibliography}{38}
\providecommand{\natexlab}[1]{#1}
\providecommand{\url}[1]{{#1}}
\providecommand{\urlprefix}{URL }
\expandafter\ifx\csname urlstyle\endcsname\relax
\providecommand{\doi}[1]{DOI~\discretionary{}{}{}#1}\else
\providecommand{\doi}{DOI~\discretionary{}{}{}\begingroup
\urlstyle{rm}\Url}\fi
\providecommand{\eprint}[2][]{\url{#2}}

%\bibitem[{Akaike(1974)}]{Akaike74}
%Akaike H (1974) A new look at the statistical model identification. IEEE Trans
%Autom Cont 19:716--723
	
	\bibitem{AzzaliniDV1996}
	A.~Azzalini and A.~Dalla-Valle.
	\newblock The multivariate skew-normal distribution.
	\newblock {\em Biometrika}, 83(4):715--726, 1996.
	
	\bibitem{Tallis1961}
	G.~M. Tallis.
	\newblock The moment generating function of the truncated multi-normal
	distribution.
	\newblock {\em Journal of the Royal Statistical Society. Series B (Statistical
		Methodology)}, 23(1):223--229, 1961.
	
	\bibitem{lien1985moments}
	Da-Hsiang~Donald Lien.
	\newblock Moments of truncated bivariate log-normal distributions.
	\newblock {\em Economics Letters}, 19(3):243--247, 1985.
	
	\bibitem{houthakker1959scope}
	HJ~Houthakker.
	\newblock {\em The scope and limits of futures trading}.
	\newblock Cowles Foundation for Research in Economics at Yale University, 1959.
	
	\bibitem{jawitz2004moments}
	James~W Jawitz.
	\newblock Moments of truncated continuous univariate distributions.
	\newblock {\em Advances in Water Resources}, 27(3):269--281, 2004.
	
	\bibitem{Kim2008}
	H.~M. Kim.
	\newblock A note on scale mixtures of skew normal distribution.
	\newblock {\em Statistics and Probability Letters}, 78, 2008.
	\newblock 1694-1701.
	
	\bibitem{flecher2010truncated}
	Cedric Flecher, Denis Allard, and Philippe Naveau.
	\newblock Truncated skew-normal distributions: moments, estimation by weighted
	moments and application to climatic data.
	\newblock {\em Metron}, 68:331--345, 2010.
	
	\bibitem{Azzalini1985}
	A.~Azzalini.
	\newblock A class of distributions which includes the normal ones.
	\newblock {\em Scandinavian Journal of Statistics}, 12:171--178, 1985.
	
	\bibitem{genc}
	Ali~{\.I} Gen{{\c{c}}}.
	\newblock Moments of truncated normal/independent distributions.
	\newblock {\em Statistical Papers}, 54:741--764, 2013.
	
	\bibitem{lin2011some}
	H.~J. Ho, T.~I. Lin, H.~Y. Chen, and W.~L. Wang.
	\newblock Some results on the truncated multivariate t distribution.
	\newblock {\em Journal of Statistical Planning and Inference}, 142:25--40,
	2012.
	
	\bibitem{arismendi2013multivariate}
	Juan~Carlos Arismendi.
	\newblock Multivariate truncated moments.
	\newblock {\em Journal of Multivariate Analysis}, 117:41--75, 2013.
	
	\bibitem{kan2017moments}
	Raymond Kan and Cesare Robotti.
	\newblock On moments of folded and truncated multivariate normal distributions.
	\newblock {\em Journal of Computational and Graphical Statistics},
	25(1):930--934, 2017.
	
	\bibitem{ArellanoG2005}
	Arellano-Valle, R.~B. and M.~G. Genton.
	\newblock On fundamental skew distributions.
	\newblock {\em Journal of Multivariate Analysis}, 96, 93--116, 2005.
	
	\bibitem{AzzaliniC1999}
	A.~Azzalini and A.~Capitanio.
	\newblock Statistical applications of the multivariate skew-normal
	distribution.
	\newblock {\em Journal of the Royal Statistical Society}, 61:579--602, 1999.
	
	\bibitem{ArellanoA2006}
	R.~B. Arellano-Valle and A.~Azzalini.
	\newblock On the unification of families of skew-normal distributions.
	\newblock {\em Scandinavian Journal of Statistics}, 33(3):561--574, 2006.
	
	\bibitem{arellano2010multivariate}
	Reinaldo~B Arellano-Valle and Marc~G Genton.
	\newblock Multivariate extended skew-t distributions and related families.
	\newblock {\em Metron}, 68(3):201--234, 2010.
	
	\bibitem{Genz1999}
	Alan Genz.
	\newblock {Numerical computation of multivariate normal probabilities}.
	\newblock {\em Journal of Computational and Graphical Statistics},
	1(2):141--149, 1992.
	
	\bibitem{bg2009moments}
	B.G. Manjunath and S.~Wilhelm.
	\newblock Moments calculation for the double truncated multivariate normal
	density.
	\newblock {\em Available at SSRN 1472153}, 2009.
	
	\bibitem{vaida2009fast}
	F.~Vaida and L.~Liu.
	\newblock {Fast implementation for normal mixed effects models with censored
		response}.
	\newblock {\em Journal of Computational and Graphical Statistics}, 18:797--817,
	2009.
	
	\bibitem{chakraborty2013multivariate}
	Ashis~Kumar Chakraborty and Moutushi Chatterjee.
	\newblock On multivariate folded normal distribution.
	\newblock {\em Sankhya B}, 75(1):1--15, 2013.
	
	\bibitem{Matos.SINICA}
	L.~A. Matos, M.~O. Prates, M.~H. Chen, and V.~H. Lachos.
	\newblock Likelihood-based inference for mixed-effects models with censored
	response using the multivariate-t distribution.
	\newblock {\em Statistica Sinica}, 23:1323--1342, 2013.
	
	\bibitem{lachos2017finite}
	Victor~H Lachos, Edgar J~L{\'o}pez Moreno, Kun Chen, and Celso
	R{\^o}mulo~Barbosa Cabral.
	\newblock Finite mixture modeling of censored data using the multivariate
	student-t distribution.
	\newblock {\em Journal of Multivariate Analysis}, 159, 2017.
	\newblock 151-167.
	
	\bibitem{AzzaliniC2003}
	A.~Azzalini and A.~Capitanio.
	\newblock Distributions generated and perturbation of symmetry with emphasis on
	the multivariate skew-t distribution.
	\newblock {\em Journal of the Royal Statistical Society, Series B},
	61:367--389, 2003.
	
	\bibitem{BrancoD2001}
	M.~D. Branco and D.~K. Dey.
	\newblock A general class of multivariate skew-elliptical distributions.
	\newblock {\em Journal of Multivariate Analysis}, 79:99--113, 2001.
	
\end{thebibliography}
%%% and comment out the ``thebibliography'' section.

\end{document}